\newif\ifsmfart \smfarttrue
\DeclareSymbolFont{cyrletters}{OT2}{wncyr}{m}{n}
\DeclareMathSymbol{\Sha}{\mathalpha}{cyrletters}{"58}
 \let\mathsec\mathsf
\let\mathcal\mathscr}
\let\mathscr\mathcal}
\let\smf@boldmath\relax\makeatother
\theoremstyle{plain}
\newtheorem{prop}[subsubsection]{Proposition}
\newtheorem{theo}[subsubsection]{Theorem}
\newtheorem{coro}[subsubsection]{Corollary}
\newtheorem{lemm}[subsubsection]{Lemma}
\newtheorem{defi}[subsubsection]{Definition}
\theoremstyle{definition}
\theoremstyle{remark}
\newtheorem{rema}[subsubsection]{Remark}
\def\subsection{\@startsection{subsection}{2}%
   \z@{.7\linespacing\@plus.3\linespacing}{0.3\linespacing}
   {\normalfont\bfseries\smf@boldmath}}
 \let\c@equation\c@subsubsection
 \let\cl@equation\cl@subsubsection
\def\l@table{\@tocline{0}{3pt plus2pt}{0pt}{}{\itshape}}
\def\card{\operatorname{Card}}
\def\Card#1{\mathopen{|}#1\mathclose{|}}
\def\Clan{\mathscr C^{\text{\upshape an}}}
\def\Clanmax{\mathscr C^{\text{\upshape an,max}}}
\def\Aff{{\mathbf A}}
\def\AD{{\mathbb A}}
\def\C{{\mathbf C}}
\def\gm{{\mathbf G}_m}
\def\Q{{\mathbf Q}}
\def\R{{\mathbf R}}
\def\Z{{\mathbf Z}}
\def\Tube{{\mathsf T}}
\let\ra\rightarrow
\let\epsilon\varepsilon \let\eps\epsilon
\let\epsilon\varepsilon
\let\phi\varphi
\let\emptyset\varnothing
\let\leq\leqslant
\let\geq\geqslant
\let\ge\geq
\def\Lie{\operatorname{Lie}}
\def\eff{{\text{\upshape eff}}}
\def\abs#1{\left\lvert{#1}\right\rvert}
\def\norm#1{\left\|{#1}\right\|}
\def\DeclareMathOperator#1#2{\def #1{\operatorname{#2}}}
\DeclareMathOperator{\Re}{Re} 
\DeclareMathOperator{\Im}{Im} 
\DeclareMathOperator{\Pic}{Pic}
\DeclareMathOperator{\Gal}{Gal}
\DeclareMathOperator{\Br}{Br}
\DeclareMathOperator{\Spec}{Spec}
\DeclareMathOperator{\rang}{rank}
\DeclareMathOperator{\rank}{rank}
\DeclareMathOperator{\div}{div}
\DeclareMathOperator{\Val}{Val}
\DeclareMathOperator{\vol}{vol}
\DeclareMathOperator{\Hom}{Hom}
\DeclareMathOperator{\Res}{Res}
\DeclareMathOperator{\Val}{Val}
\DeclareMathOperator{\Fr}{Fr}
\def\disc{\operatorname{disc}}
\def\Br{\text{\upshape Br}}
\def\fin{\text{\upshape fin}}
\let\bar\overline
\def\bmA{{\bar{\mathscr A}}}
\def\eg{\emph{e.g.}\xspace}
\def\ie{\emph{i.e.}\xspace}
	\def\EP{\operatorname{EP}}
\title[Integral points of bounded height]{Integral points of bounded height on toric varieties}
\author{Antoine Chambert-Loir}
\address{Universit\'e de Rennes~1, IRMAR--UMR 6625 du CNRS, Campus de Beaulieu, 35042 Rennes Cedex, France}
\address{Institut universitaire de France}
\address{Institute for Advanced Study, Einstein Drive, Princeton, NJ 08540, USA}
\email{antoine.chambert-loir@univ-rennes1.fr}
\author{Yuri Tschinkel}
\address{Courant Institute, NYU, 251 Mercer St.  New York, NY 10012, USA}
\email{tschinkel@cims.nyu.edu}
\begin{document}
\date{\today}
 
\begin{abstract}
We establish asymptotic formulas  for the number of integral points of bounded height
on toric varieties.
\end{abstract}
 
\ifsmfart 
 \begin{altabstract}
Nous \'etablissons un d\'eveloppement asymptotique du nombre
de points entiers de hauteur born\'ee dans les vari\'et\'es toriques.
 \end{altabstract}
\fi

\keywords{Heights, Poisson formula, Manin's conjecture, Tamagawa measure}
\subjclass{11G50 (11G35, 14G05)}

\maketitle

\tableofcontents

\section{Introduction}

In this paper we study the distribution of integral points of
bounded height on toric varieties, \ie, quasi-projective algebraic varieties defined over number 
fields, equipped with an action of an algebraic torus~$T$
and containing~$T$ as an open dense orbit.

The case of projective compactifications has been the subject of intense study.
It has been treated completely over number fields via \emph{adelic harmonic analysis}
by Batyrev and the second author
in a series of papers, see~\eg, \cite{batyrev-t95b,batyrev-t96,batyrev-t98b}.
Subsequently, Salberger~\cite{salberger98} and de la Bret\`eche~\cite{breteche98b}
provided an alternative proof which relies on the parametrization of rational points
by integral points on certain descent varieties called \emph{universal torsors.}
These papers use a canonical height on toric varieties which reduces to the standard
Weil height (maximum of absolute values of coordinates) 
in the case when $X$ is a projective space.
Some other choices of heights have also been considered, at
least for projective spaces, \eg,  \cite{schanuel79,essouabri2005}.
Both methods, harmonic analysis and passage to universal torsors,
have been applied in the function 
field case by Bourqui~\cite{bourqui2003b,bourqui2011}.

These results were motivated by conjectures of Batyrev, Manin, Peyre and others
concerning the asymptotic behavior of the number of points of bounded height 
in algebraic varieties over number fields~\cite{franke-m-t89,batyrev-m90,peyre95,batyrev-t98}.
They stimulated the study of height zeta functions of equivariant compactifications
of other algebraic groups and homogeneous spaces~\cite{strauch-t99,chambert-loir-t2002,shalika-tb-t2007}, as well as the study of universal torsors over Del Pezzo surfaces.

A related, classical, problem in number theory is the study of \emph{integral points}
on algebraic varieties, 
for example 
complete intersections of low degree (circle method, \cite{birch62}),
algebraic groups or homogeneous spaces of semisimple groups (via ergodic theory
or spectral methods, \cite{duke-r-s1993,eskin-mcmullen1993,eskin-m-s1996,borovoi-r1995}).

In this paper, as well as in~\cite{chambert-loir-tschinkel2009b},
we apply the geometric and analytic framework proposed 
in~\cite{chambert-loir-tschinkel2010} to ``interpolate'' between these two counting problems.
Precisely, let $X$ be a smooth projective toric variety over a number field~$F$,
let $T$ be the underlying torus, and let $U\subseteq X$ be the complement
of a $T$-stable divisor~$D$ in~$X$.
We establish an asymptotic formula for the number of integral points of bounded height
on~$U$. The notion of integral points depends on the choice of a model of~$U$
over the ring~$\mathfrak o_F$ of integers of~$F$, while the normalization
of the height is given by the log-anticanonical divisor $-(K_X+D)$ of the pair $(X,D)$; we
assume that this log-anticanonical divisor belongs to the interior
of the effective cone of~$X$.
In Theorem~\ref{thm:integral-main}, we prove that 
\[ N(B) /  B (\log B)^{b-1} \]
has a limit~$\Theta$, when $B$ grows to infinity,
where $b$ is some explicit positive integer described below.

We first need to recall two definitions
from~\cite{chambert-loir-tschinkel2010}.
First (Definition~2.2), $\EP(U_{\bar F})$ is the virtual $\Gal(\bar F/F)$-module
given
by 
\[ \left[\mathrm H^0(U_{\bar F},\gm)/\bar F^*\right]_\Q
 - \left[ \mathrm H^1(U_{\bar F},\gm)\right]_\Q  \]
and $r(\EP(U))$, the dimension of the subspace of invariants under~$\Gal(\bar F/F)$,
is given by
\[ r(\EP(U)) = \rank (\mathrm H^0(U,\gm)/F^*) - \rank (\Pic(U)). \]
Secondly (\S3.1), 
for any place~$v$ of~$F$,
the analytic Clemens complex~$\Clan_v(D)$, 
is a simplicial complex which
encodes the incidence properties of the $v$-adic manifolds 
given by the irreducible
components of~$D$.
In this language, the integer~$b$ is given by
\[ b= r(\EP(U))+\sum_{v\mid\infty} (1+\dim\Clan_v(D)); \]

We also give a formula for the limit~$\Theta$ 
(see Theorem~\ref{thm:integral-main}). It involves 
the following analytic and geometric constants:
\begin{itemize}
\item volumes of adelic subsets with respect to suitable Tamagawa measures;
\item local volumes (at archimedean places) of minimal strata of boundary  components
of~$D$;
\item characteristic functions of certain variants of the effective cones of~$X$
attached to these strata and to the Picard group of~$U$;
\item orders of Galois cohomology groups.
\end{itemize}
This explicit formula shows that the constant $\Theta$
is positive provided that there are integral points in~$\mathscr U$.

We also establish an equidistribution
theorem for integral points of~$U$ of bounded height. This is
already new for $U=X$ where we obtain that 
rational points of bounded height in~$T(F)$ equidistribute
to Peyre's Tamagawa measure on~$X(\AD_F)^{\Br(X)}$,
the subset of~$X(\AD_F)$ where the Brauer--Manin obstruction vanishes.
This refines the classical result that rational points are dense in this subset.

In a series of papers, \cite{moroz1997a,moroz1997b,moroz1999,moroz2005},
Moroz proved similar, though less precise, results for certain affine toric varieties
over $\Q$.

\medskip
Here is the roadmap of the paper. 
In Section~\ref{sec:recall}, we recall basic facts  concerning algebraic tori,
toric varieties, heights, and Tamagawa measures.
The proof of Theorem~\ref{thm:integral-main} 
is presented in Section~\ref{sec:main}.
It relies on the Poisson summation formula on the adelic torus attached to~$T$,
and follows the strategy  of~\cite{batyrev-t95b}. As was already the
case for equivariant compactifications of additive groups in~\cite{chambert-loir-tschinkel2009b},
new technical complications arise from the presence of poles of the local Fourier transforms
at archimedean places, which contribute to the main term in the asymptotic formula.

\bigskip

\emph{Acknowledgments. ---}
The first author was supported by the Institut
universitaire de France, as well as by the National Science
Foundation under agreement No.~DMS-0635607. He would also
like to thank the Institute for Advanced Study in Princeton
for its warm hospitality which permitted the completion of this paper.
The second author was partially supported by NSF grants DMS-0739380 and 
0901777.
We thank Jean-Louis Colliot-Thélène, David Harari, and  Emmanuel Peyre
for their interest in this work, and for helpful remarks.

\section{Toolbox}
\label{sec:recall}

We now recall basic facts concerning
algebraic tori, toric varieties, heights, and measures.

\subsection{Algebraic numbers}\label{sec.alg-numbers}
Let $F$ be a number field.
Let $\Val(F)$ be the set of normalized absolute values of~$F$.
For $v\in \Val(F)$, we write $\abs\cdot_v$ for the corresponding
absolute value, $F_v$ for the completion of~$F$ at~$v$. If $v$ is ultrametric, we also put $\mathfrak o_v$, $\varpi_v$, $k_v$ for the ring of integers, a chosen local uniformizing element and the residue field at~$v$, respectively;
we write $p_v$ for the characteristic of
the field~$k_v$ and $q_v$ for its cardinality.

We normalize the Haar measure of a local field~$E$ as in~\cite{tate67b} (p.~310)
so that the unit ball has measure
\begin{itemize}
\item 2 if $E=\R$;
\item $2\pi$ if~$E=\C$;
\item $\abs{\disc(E/\Q_p)}^{-1/2}$ if $E$ is a finite extension of~$\Q_p$.
\end{itemize}
We also define a real number~$c_E$ by the following
formula:
\begin{itemize}
\item $c_\R=2$;
\item $c_\C=2\pi$;
\item $c_E=\abs{\disc(E/\Q_p)}^{-1/2} (1-q^{-1})/\log(q))$
if $E$ is a finite extension of~$\Q_p$ and $q$ is the norm of a uniformizer.
\end{itemize}

The ring of adeles of~$F$ is the subspace~$\AD_F$
of the product ring $\prod_{v\in \Val(F)}F_v$ consisting
of families $(x_v)$ such that $x_v\in\mathfrak o_v$ for all but finitely
many places~$v$. If $S$ is a finite set of places of~$F$,
we write $\AD_F^S$ for the similar subspace obtained
by removing places in~$S$. When $S$ is the set of archimedean
places of~$F$, we write $\AD_{F,\fin}=\AD_F^S$ (finite adeles).

Let $E$ be a finite Galois extension of~$F$ and let $\Gamma=\Gal(E/F)$
be its Galois group.
For any $v\in\Val(F)$, 
we fix a decomposition group~$\Gamma_v\subset\Gamma$ at $v$
and write $\Gamma^0_v$ for its inertia subgroup.
If $v$ is finite, we fix a geometric Frobenius element~$\Fr_v\in\Gamma_v/\Gamma^0_v$.

Let $R$ be a ring, let $\bar M$ be an $R[\Gamma]$-module 
which is free of finite rank as an $R$-module.
The Artin L-function of~$\bar M$ is defined as the Euler product
\[ \mathrm L(s,\bar M)= \prod_{v\nmid \infty} \mathrm L_v(s,\bar M),
\qquad
    \mathrm L_v(s,\bar M)=\det\big(1-q_v^{-s}\Fr_v\big|\bar M^{\Gamma_v^0}\big)^{-1}.
\]

\subsection{Algebraic tori}
Let $T$ be an algebraic torus of dimension~$d$ over $F$, \ie, 
an algebraic $F$-group scheme which becomes isomorphic
to~$\mathbf G_m^d$ over an extension of~$F$.
There exists a finite Galois extension~$E$ of~$F$ 
such that $T_E\simeq\gm^d$; we fix such an extension and let
$\Gamma$ be its Galois group.

A character of~$T$ is a morphism of algebraic groups $T\ra\gm$;
and a cocharacter a morphism of algebraic groups $\gm\ra T$.
Let $\bar M=\mathrm X^*(T_{E})$ 
be the group of $E$-rational characters
of $T$, it is a torsion-free $\Z$-module of rank~$d$
endowed with an action of $\Gamma$.
The group~$\bar N$ dual to~$\bar M$ is the group of cocharacters 
of~$T_{E}$.

The group $M=\bar M^\Gamma$ is the group
of $F$-rational characters. The group $\bar N^\Gamma$
of $F$-rational cocharacters maps naturally into
the space of coinvariants $N=\bar N_\Gamma$ which
identifies with the dual of~$M$.
The map $\bar N^\Gamma\ra N$ is not an isomorphism in general.

For any place $v\in\Val(F)$, we put
\[ 
M_v= \begin{cases} \bar M^{\Gamma_v}           & \text{ for $v\nmid \infty$ } \\
                   \bar M^{\Gamma_v}\otimes \R & \text{ for $v\mid\infty$}\end{cases},
\]   
and define~$N_v$ similarly. For $v\mid\infty$, 
the perfect duality between~$\bar M$
and~$\bar N$ induces a perfect duality $M_v\times N_v\ra\R$.
If $v$ is nonarchimedean, there is a natural bilinear map $M_v\times N_v\ra\Z$
which, however, is not a perfect pairing in general.

For any nonarchimedean $v\in\Val(F)$, the bilinear
map 
\[ 
T(F_v)\times M_v \ra\Z, \qquad 
(t,m)\mapsto -\log(\abs{m(t)})/\log (q_v)\]
induces a homomorphism
$\log_v\colon T(F_v)\ra N_v$
whose  kernel~$K_v$ is the maximal compact subgroup of~$T(F_v)$
and whose image has finite index. Moreover, $\log_v$
is surjective for all ultrametric places~$v$ which are unramified in
the splitting field~$E$ (see, \eg, \cite[p.~449]{draxl1971}).
Similarly, for any archimedean~$v\in \Val(F)$,  the bilinear map
\[
T(F_v)\times \bar M^{\Gamma_v}\ra\R, \qquad 
(t,m)\mapsto \log(\abs{m(t)})\]
induces a surjective homomorphism $\log_v\colon T(F_v)\ra N_v$
whose kernel~$K_v$ is the maximal compact subgroup of~$T(F_v)$.

\subsection{Description of the adelic group}
\label{sect:adeles}
Let $\AD_F$ be the ring of adeles of~$F$. The bilinear map
\[ T(\AD_F)\times M_\R \ra \R, \qquad ((t_v),m)\mapsto \sum_{v\in\Val(F)} 
\log(|m(t_v)|)
\]
induces a surjective continuous morphism 
$ T(\AD_F)\ra N_\R$. 
This morphism admits a section, \eg,
given by $n\mapsto (t_v(n))$, where $t_v(n)=1$ if $v$ is finite,
$t_v(n)=\exp(n/[F:\Q])$ if $v$ is real, and $t_v(n)=\exp(2n/[F:\Q])$
if $v$ is complex.

Let $T(\AD_F)^1$ be its kernel.
By the product formula, $T(F)$ embeds as a discrete subgroup into~$T(\AD_F)^1$;
moreover, the quotient $T(\AD_F)^1/T(F)$ is compact.
 This induces a decomposition
\[ T(\AD_F)/T(F) \simeq N_\R \times (T(\AD_F)^1/T(F)) \]
of $T(\AD_F)/T(F)$.
The group $K_T=\prod_{v\in \Val(F)} K_v$ is the maximal compact
subgroup of~$T(\AD_F)$; it is contained in $T(\AD_F)^1$.

Let $S\subset \Val(F)$ be a finite subset containing 
the archimedean places.
The map 
\[ T(\AD_F) \ra \prod_{v\in S} T(F_v) \ra \prod_{v\in S} N_v \]
induces an isomorphism
\[ T(\AD_F)/T(F)K_T \simeq  \prod_{v\in S} N_v / T(\mathfrak o_{F,S}), \]
where $T(\mathfrak o_{F,S})=T(F)\cap \bigcap_{v\not\in S}K_v$.
The map $T(\mathfrak o_{F,S})\ra \prod_{v\in S} N_v$ has finite kernel.
Its image is a cocompact lattice in the subspace of $\prod_{v\in S}N_v$
consisting of tuples $(n_v)_{v\in S}$ such that $\sum _{v\in S}\langle m,n_v\rangle=0$
for any $m\in M$.

\subsection{Characters}
Recall that the characters of a topological group~$G$ are the 
continuous homomorphisms
to the group~$\mathbf S^1$ of complex numbers of absolute value~$1$.
They form a topological group~$G^*$.

A character~$\chi$ of $T(\AD_F)$ is the product $(\chi_v)$
of its local components:
for any~$v\in\Val(F)$, $\chi_v$ is a character of $T(F_v)$.
A local character $\chi_v$ is called unramified if it is trivial on~$K_v$;
then there exists a unique element $m(\chi_v)\in M_v$
such that 
\[
\chi_v(t)=\exp(i\langle m(\chi_v),\log_v(t)\rangle),\qquad  \text{ for all } \, t\in T(F_v).
\]
A global character~$\chi$ is called unramified if all of its
local components  are unramified, equivalently if it is trivial
on~$K_T$;
it is called automorphic if it is trivial on~$T(F)$.

The description of $T(\AD_F)/T(F)K_T$ in Section~\ref{sect:adeles}
identifies an automorphic unramified character
of~$T(\AD_F)$ as a character of $\prod_{v\in S}N_v/T(\mathfrak o_{F,S})$.
Then, $(T(\AD_F)/T(F)K_T)^*$ is the product of
the continuous group~$M_\R$ 
and the dual $\Hom(T(\mathfrak o_{F,S}),\Z)$
of the discrete group~$T(\mathfrak o_{F,S})/\text{torsion}$.

\subsection{Toric varieties}\label{sec.toric-var}
Let $X$ be a smooth projective equivariant compactification of $T$,
\ie, a smooth projective variety~$X$ over~$F$ endowed with
an action of~$T$, and containing~$T$ as a dense open orbit.
The boundary divisor is the complementary closed subset $X\setminus T$;
it is the opposite~$-K_X$ of a canonical divisor.

By the general theory of toric varieties over algebraically closed
fields, we may assume, extending~$E$ if necessary,
that the irreducible components
of the boundary divisor $X_{E}\setminus T_{E}$ are smooth and 
geometrically irreducible, and that they meet transversally.
Let $\bar{\mathscr A}$ be the set of these irreducible 
boundary components. Since $X_{E}\setminus T_{E}$
is defined over~$F$, the set $\bar{\mathscr A}$ 
admits a natural action of the Galois group~$\Gamma$, as well as of its
subgroups~$\Gamma_v$, for $v\in\Val(F)$.
We write $\mathscr A$, resp. $\mathscr A_v$ for the sets of orbits; 
the corresponding elements label $F$-irreducible, 
respectively $F_v$-irreducible, boundary components of $X\setminus T$.

For any $\alpha\in\bar{\mathscr A}$, we write $F_\alpha$
for the subfield of~$E$ fixed by the stabilizer
of~$D_\alpha$ in~$\Gamma$, and $\Delta_\alpha$ for the
sum of all irreducible components~$D_{\alpha'}$, for
$\alpha'\in\Gamma\alpha$. If $\alpha$ and~$\alpha'$ belong
to the same orbit, the fields~$F_{\alpha}$ and~$F_{\alpha'}$ are
conjugate. For any finite place~$v\in\Val(F)$, the
choice of a decomposition subgroup~$\Gamma_v$ induces
a specific place of~$F_\alpha$, still denoted~$v$, and we write~$f_\alpha$
for the degree of~$F_{\alpha,v}$ over~$F_v$.

The closed cone of effective divisors 
$\Lambda_{\rm eff}(X_{E})\subset \Pic(X_{E})_{\R}$
on~$X_{E}$
is spanned by the classes of boundary components~$D_\alpha$,
for $\alpha\in\bar{\mathscr A}$.
Similarly,
the closed cone of effective divisors 
$\Lambda_{\rm eff}(X)\subset \Pic(X)_{\R}$
on~$X$ is spanned by the classes of the divisors~$\Delta_\alpha$.
 
Viewing a character of $T_{E}$ as a rational function on $X_{E}$
and taking its divisor
defines a canonical exact sequence of torsion-free $\Gamma$-modules
\begin{equation}
\label{eqn:toric-seq}
0\ra \bar M \ra \Pic^T(X_{E}) \stackrel{\pi}{\longrightarrow} \Pic(X_{E}) \ra 0,
\end{equation}
where  $\Pic^T(X_{E})\simeq \Z^{\bar{\mathcal A}}$ 
is the group of equivalence classes of
$T_{E}$-linearized line bundles on $X_{E}$. 
(Linearized line bundles are in  canonical correspondence
with $T_{E}$-invariant divisors in~$X_{E}$, that is, 
linear combinations of boundary components.)
The injectivity on the left follows from the fact that $X_{E}$ 
is normal and projective: if a character of~$T_{E}$ 
has neither zeroes nor poles,
then it is a regular invertible function on~$X_{E}$, hence a constant.
Taking Galois cohomology and using the fact that $\Pic^T(X_{E})$
is a permutation module, we obtain the following exact sequences:
\begin{gather}
\label{eqn:m}
0 \ra M \ra \Pic^T(X) \ra \Pic(X) \ra \mathrm H^1(\Gamma,M) \ra 0 \\
\label{eqn:mv}
0 \ra M_v \ra \Pic^T(X_{F_v}) \ra \Pic(X_{F_v}) \ra \mathrm H^1(\Gamma_v,\bar M)\ra 0.
\end{gather}
Moreover, the isomorphism $\Pic^T(X_{E})\simeq \Z^{\bmA}$
induces similar isomorphisms
\[ \Pic^T(X)\simeq \Z^{\mathscr A}, \quad \Pic^T(X_{F_v})\simeq\Z^{\mathscr A_v}. \]

By duality, the map $\bar M\ra\mathbf Z^{\bmA}$ gives rise
to a morphism of tori $\prod_{\alpha\in \mathscr A} T_\alpha\ra T$,
where, for $\alpha\in\mathscr A$, $T_\alpha=\Res_{F_\alpha/F}(\gm)$
is the Weil restriction of scalars 
from~$F_\alpha$ to~$F$
of the multiplicative group.
Using this morphism, any automorphic character $\chi\in (T(\AD_F)/T(F))^*$
induces an automorphic character
of $T_\alpha(\AD_F)$, \ie, a Hecke character~$\chi_\alpha$ of~$F_\alpha$.

\subsection{Quasi-projective toric varieties}

Let $D$ be a reduced divisor in~$X$ disjoint from~$T$.
The open set $U=X\setminus D$ is then a toric variety,
non-projective for $D\neq\emptyset$.
Let $\bar{\mathscr A}_D\subset\bmA$ be the set of irreducible components
of~$D_{E}$ and $\bar{\mathscr A}_U=\bmA\setminus \bar{\mathscr A}_D$
be the complementary subset.
The irreducible components of the divisor $U_{E}\setminus T_{E}$ 
are indexed by the traces on~$U_{E}$ of the $D_\alpha$, 
for $\alpha\in\bmA_U$. The sets $\bmA_D$ and $\bmA_U$ are stable
under the action of~$\Gamma$; we let $\mathscr A_D$ and $\mathscr A_U$
be the sets of $\Gamma$-orbits (these are subsets of~$\mathscr A$).
There is a similar $\Gamma$-equivariant exact sequence
\begin{equation}
\label{eqn:toric-seq.U}
0 \ra \mathrm H^0(U_{E},\mathscr O_U^\times)/E^\times \ra \bar M \ra \Z^{\bar{\mathcal A}\setminus\bmA_D}\stackrel{\pi}{\longrightarrow} \Pic(U_{E}) \ra 0.
\end{equation}

Let $\rho=(\rho_\alpha)$ with $\rho_\alpha=0$ if $\alpha\in\mathscr A_D$
and $1$ otherwise. 
Throughout we shall assume that $\rho\in \Lambda_{\rm eff}(X)^{\circ}$, 
\ie, is contained in the interior of the image under 
$\pi$ of the simplicial cone $\R_{\ge 0}^{\mathscr A}$.  
In more geometric terms, this means that the line bundle
$-(K_X+D)$ on~$X$ is big; this includes the particular case
where $(X,D)$ is log-Fano, \ie, $-(K_X+D)$ is ample.

\subsection{Metrized line bundles and heights}
Each boundary divisor $D_\alpha$, $\alpha \in \bar{\mathscr A}$,
defines a $T_E$-linearized line bundle on $X_E$. 
We fix smooth adelic metrics on these line bundles:
by definition these are collections of metrics, at all places~$w$ of~$E$,
almost all of which come from a model of~$X_E$ defined over
the ring of integers of~$E$; the smoothness condition means
locally constant at finite places, 
and $\mathscr C^\infty$ at archimedean places.
We assume that these metrics are invariant under
the natural action the local Galois groups~$\Gamma_v$.
We also assume that the metrics on a $T$-linearized line bundle
only depend on the isomorphy class of the underlying line bundle.

For each~$\alpha\in\bar{\mathscr A}$, let $\mathsec f_\alpha$
be the canonical section of the line bundle $\mathscr O(D_\alpha)$
with divisor~$D_\alpha$.
Then the resulting height pairing  is defined by
\[ 
H :    T(\AD_E)\times \Pic^T(X_E)_{\C} \ra \C^*,
\quad
 ((x_w); \sum s_\alpha D_\alpha)) \mapsto 
        \prod_{\alpha\in\bar{\mathscr A}} \prod_{w\in\Val(E)} \norm{\mathsec f_\alpha(x_w)}^{s_\alpha/[E:F]}. \]
It is $\Gamma$-equivariant, smooth in 
the first variable and linear in the second variable.
(If $\mathbf s\in\C^{\bmA}$, we simply write 
$H(x;\mathbf s)$ for $H(x;\sum s_\alpha D_\alpha)$.)

We shall also restrict the height pairing to line bundles
defined over~$F$ and points in $T(\AD_F)$.
This is compatible with a  corresponding theory of adelic
metrics over~$F$.   Indeed,  
a component of $X\setminus T$ decomposes
over~$E$ as a sum of some divisors~$D_\alpha$ and this furnishes
a canonical adelic metric on every line bundle on~$X$.

\subsection{Volume forms and measures}

Our analysis of the number of points of bounded height
makes use of certain Radon measures on local analytic manifolds and
on adelic spaces. Here we recall the main definitions,
referring to~\cite{chambert-loir-tschinkel2010} 
for a  detailed account of the constructions of
these measures in a general geometric context.

Let $v$ be a place  of~$F$. 
We fix a Haar measure on each completion~$F_v$
of~$F$, in such a way that $\mu_v(\mathfrak o_v)=1$
for almost all finite places~$v$.
Recall that the divisor on~$X$ 
of the invariant $n$-form $\mathrm dx$ on~$T$ 
(which is well-defined up to sign)  is given by 
\[ 
\div(\mathrm dx) = -\sum_{\alpha \in {\mathscr A}} D_\alpha.\]
We now define several measures on $X(F_v)$.  
The first is a Haar measure for the torus $T(F_v)$.
It is defined ``\`a la Weil'' by 
\[
\mu'_{T,v}=\abs{\mathrm dx}_v,\]
considering the invariant form $\mathrm dx$ as a gauge form on~$T$.
Let $\tau_v(T)=\mu'_{T,v}(K_v)$ be the measure of the maximal compact
subgroup~$K_v$ of~$T(F_v)$. 
If $v$ is unramified in~$E$, then $T$ has good reduction at~$v$ and
\[ \tau_v(T)= \# T(\mathfrak o_v)/q_v^{\dim T}
=  \mathrm L_v(1, \bar M)^ {-1} \]
(see~\cite{ono1961}, 3.3),
where we extend $T$ as a torus group scheme over~$\Spec(\mathfrak o_v)$.
We shall use the renormalized measure 
\[
\mu_{T,v}=\tau_v(T)^{-1}\abs{\mathrm dx}_v.
\]
The local Peyre-Tamagawa measure on $X(F_v)$ is defined by 
\[
\mu'_{X,v}=\abs{dx}_v/\norm{dx}_v.
\]
Since $\Pic(X_{E})$ is a free~$\Z$-module of
finite rank, two other normalizations are possible:
\begin{gather*}
 \mu_{X,v} = \mathrm L_v(1,\Pic(X_{E}))^{-1} \mu'_{X,v} , \\
 \mu_{U,v}=\mathrm L_v(1,\EP(U_{\bar F}))\mu'_{X,v} ,
\end{gather*}
where $\EP(U_{\bar F})$ is the virtual Galois module
\[ \left[\mathrm H^0(U_E,\mathscr O_X^*)/E^* \right] - \left[\mathrm H^1(U_E,\mathscr O_X^*)/\text{torsion}\right]. \]

\medskip

With these normalizations, the products
of local measures converge and define measures on suitable adelic spaces:
$\prod_v \mu_{T,v}$ is a Haar measure on $T(\AD_F)$,
$\prod_v \mu_{X,v}$ and $\prod_v\mu_{U,v}$ are Radon measures
on $X(\AD_F)$ and $U(\AD_F)$, respectively
(\cite{chambert-loir-tschinkel2010}, Theorem~\ref{prop.convergence}).
For any finite $S\subset\Val(F)$, we define Radon measures
on $T(\AD_F^S)$,  $X(\AD_{F}^S)$, and $U(\AD_F^S)$, by
\begin{align*}
 \mu_T & = \mathrm L_*^S(1,\bar M)^{-1} \prod_{v\not\in S}\mu_{T,v}, \\
 \mu_X & = \mathrm L_*^S(1,\Pic (X_{E})) \prod_{v\not\in S}\mu_{X,v}, \\
 \mu_U & = \mathrm L_*^S(1,\EP(U_{\bar F}))^{-1} \prod_{v\not\in S}\mu_{U,v},\end{align*}
where $\mathrm L_*^S(1,\cdot)$ denotes the principal value
of the Artin L-function at 1, with the finite Euler factors in~$S$
removed.

\medskip

In \cite{chambert-loir-tschinkel2010},
we have also introduced \emph{residue measures}
which are Radon measures on $X(F_v)$ with support on~$D(F_v)$.
Recall that the $F_v$-analytic Clemens complex 
$\Clan_v(D)$ is a poset whose faces are pairs $(A,Z)$ where $A$ is a nonempty
subset of~$\mathscr A_v$ and $Z$ 
is an $F_v$-irreducible component of~$D_A=\bigcap_{\alpha\in A}D_{\alpha}$
such that $Z(F_v)\neq\emptyset$.
Its order relation is the one opposite to inclusion.
(In the sequel, we shall often omit the irreducible component~$Z$
from the notation.)

For each $\alpha\in \mathscr A_v$,
we let $\Aff_{F_{\alpha,v}}$ be the Weil restriction of scalars
of the affine line from~$F_{\alpha,v}$ to~$F_v$; it is an affine space
of dimension~$[F_{\alpha,v}:F_\alpha]=\Card{\alpha}$. The norm
map~$\mathrm N\colon F_{\alpha,v}\ra F_v$ induces a polynomial
function~$\mathrm N$ on~$\Aff_{F_{\alpha,v}}$ which defines the origin
on the level of~$F_v$-rational points. By abuse of notation,
we write $\mathrm dx_\alpha$ for the top differential form
on~$\Aff_{F_{\alpha,v}}$ deduced from the one-form~$\mathrm d x$ on~$\Aff^1$.

Let $x\in X(F_v)$ and let $A_x$ 
be the set of $\alpha\in\mathscr A_v$
such that $x\in D_\alpha$. 
There exists a neighborhood~$U_x$ of~$x$ in~$X(F_v)$ and a smooth map 
$(x_\alpha)_{\alpha\in A}\colon U_x\ra \prod_{\alpha\in A}\Aff_{F_{\alpha,v}}$
which defines $D_A(F_v)$ in a neighborhood of~$x$.

Fix a pair~$(A,Z)$ in~$\Clan_v(D)$. 
The description above shows that $Z(F_v)$
is a smooth $v$-adic submanifold of $X(F_v)$ of
codimension~$\sum_{\alpha\in\mathscr A_v}\Card{\alpha}$.
Moreover, its canonical bundle admits a metric, defined inductively
via the adjunction formula, in such a way that for
any local top differential form~$\omega$ on~$Z(F_v)$,
\[ \norm{\omega} = \norm{\tilde\omega \wedge \bigwedge_{\alpha\in A}\mathrm dx_\alpha} 
\prod_{\alpha\in \mathscr A_v} \lim_x \frac{\norm{\mathsec f_{\alpha}}}{\abs{\mathrm N(x_\alpha)}}, \]
where $\tilde\omega$ is any local differential form on~$X(F_v)$ which
extends~$\omega$.
This gives rise to a measure~$\tau_{(A,Z)}$ on~$Z(F_v)$.
As in~\cite{chambert-loir-tschinkel2010}, we normalize this
measure further, multiplying it by the finite 
product~$\prod_{\alpha\in A}c_{F_\alpha}$ of constants
defined as in Section~\ref{sec.alg-numbers}.

\section{Integral points}
\label{sec:main}

\subsection{Setup}


Let $F$ be a number field, $T$ an algebraic torus defined over~$F$,
and $X$ a smooth projective equivariant compactification of~$T$.
Let $D$ be a reduced effective 
divisor in~$X\setminus T$ and let $U=X\setminus D$.
We assume that the divisor $-(K_X+D)$ on~$X$ is big.

Let $\mathscr U$ be a fixed flat $\mathfrak o_F$-scheme of finite type
with generic fiber~$U$. A rational point $x\in T(F)$
will be called $\mathfrak o_F$-integral if there exists a
section $\eps_x\colon\Spec(\mathfrak o_F)\ra \mathscr U$
which extends~$x$. Similarly, for any finite place~$v\in\Val(F)$,
a point $x\in T(F_v)$ will be called $\mathfrak o_v$-integral if it extends
to a section $\Spec(\mathfrak o_v)\ra\mathscr U$.
For any finite place~$v$, we write
$\delta_v$ for the set-theoretic
characteristic function of the set of $\mathfrak o_v$-integral points
in~$T(F_v)$. It is a locally constant function on $T(F_v)$
whose support is relatively compact in~$U(F_v)$.
For any archimedean  place~$v$, we put $\delta_v=1$ and write
\[ 
\delta = \prod_{v\in \Val(F)} \delta_v.
\] 

The generating Dirichlet series of integral points
is called the \emph{height zeta function}; it takes the form
\[ 
\mathrm Z(\mathbf s) =
\sum_{x\in T(F)\cap  \mathscr U(\mathfrak o_{F})} H(x;\mathbf s)^{-1}
= \sum_{x\in T(F)} H(x;\mathbf s)^{-1} \delta(x).
\] 
This series converges absolutely and uniformly
when all coordinates of~$\mathbf s$ have a sufficiently large real part,
and defines a holomorphic function in that domain.
Formally, we have the Poisson formula
\[
\mathrm Z(\mathbf s) = \int \hat{H}(\chi;\mathbf s) \mathrm d\chi,
\] 
where the integral is over the locally compact abelian group of characters of 
$T(\AD_F)/T(F)$ with respect to an appropriate Haar measure $\mathrm d\chi$,
and  
\[ 
\hat{H}(\chi;\mathbf s)=\int_{T(\AD_F)} H(x;\mathbf s)^{-1}\delta(x) \chi(x) \mathrm d\mu_T(  x)
\] 
is the corresponding 
Fourier transform of the height function with respect to the fixed Haar measure $\mathrm d\mu_T$. 
As in the study of rational points in \cite{batyrev-t95b,batyrev-t96,batyrev-t98b},
we investigate the height
zeta function by proving first that the Poisson formula applies;
its right-hand-side provides
a meromorphic continuation for the height zeta function.
A Tauberian theorem will then imply an asymptotic expansion for the 
number of integral points of bounded height.

If $V$ is a finite-dimensional real vector space and $\Omega$
an open subset of~$V$, we write $\Tube(\Omega)=\Omega+\mathrm iV$
for the \emph{tube domain} of~$V_\C$ over~$\Omega$.
If $V$ has explicit coordinates~$(x_\alpha)$,
we shall also write $\Tube_{>\delta}$ for the tube domain over
the open subset~$\Omega_\delta$ defined by the inequalities $x_\alpha>\delta$.

\subsection{Fourier transforms at finite places}

\begin{lemm}\label{lemma.compact.open}
For any finite place~$v$ of~$F$ and any character
$\chi_v\in T(F_v)^*$ the local Fourier transform 
$\hat H_v(\chi_v;\mathbf s)$ converges absolutely 
if $\Re(s_\alpha)>0$ for all $\alpha\not\in\mathscr A_U$ and defines
a holomorphic function of $\mathbf s$ in the tube
domain of~$\C^{\mathscr A}$ defined by these inequalities.

Moreover, there exists a compact open subgroup~$K_v$ of $T(F_v)$,
equal to the maximal compact subgroup for almost all~$v$,
such that $\hat H_v(\chi_v;\mathbf s)=0$ for any character~$\chi_v$
which is nontrivial on~$K_v$.
\end{lemm}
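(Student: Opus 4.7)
My plan is to combine the invariance of the integrand under a compact open subgroup of $T(F_v)$ with the explicit piecewise-linear description of the height in logarithmic toric coordinates.

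For the vanishing statement, I would first note that at any finite place the smooth adelic metrics on the line bundles $\mathscr O(D_\alpha)$ are locally constant, so $H_v(\cdot;\mathbf s)^{-1}$ is invariant under a compact open subgroup of $T(F_v)$. The function $\delta_v$ is likewise locally constant and has support relatively compact in $U(F_v)$, so it is invariant under a compact open subgroup as well. Taking the intersection gives a compact open $K_v$ under which the whole integrand $H_v(\cdot;\mathbf s)^{-1}\delta_v$ is invariant. If $\chi_v$ is nontrivial on $K_v$, pick $k_0\in K_v$ with $\chi_v(k_0)\ne 1$; the change of variables $x\mapsto k_0 x$ in the defining integral, combined with the translation invariance of $\mu_{T,v}$ and the $K_v$-invariance of the integrand, yields $\hat H_v(\chi_v;\mathbf s)=\chi_v(k_0)\,\hat H_v(\chi_v;\mathbf s)$, forcing $\hat H_v(\chi_v;\mathbf s)=0$. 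For almost all $v$ one arranges that $\mathscr U$ spreads out to a $T$-equivariant scheme over $\mathfrak o_v$ and that the adelic metrics come from this integral model; at such places both $\delta_v$ and $H_v(\cdot;\mathbf s)^{-1}$ are invariant under the maximal compact subgroup $T(\mathfrak o_v)$, so one may take $K_v=T(\mathfrak o_v)$.

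For absolute convergence, the $K_v$-invariance lets me rewrite the integral as a weighted sum over $T(F_v)/K_v$, which via $\log_v$ embeds as a sublattice of finite index in $N_v$. From the toric description, on the coset labelled by $n\in N_v$ one has $\|\mathsf f_\alpha(\cdot)\|_v=q_v^{-\phi_\alpha(n)}$ up to bounded factors, where $\phi_\alpha$ is the piecewise-linear support function of the ray $\rho_\alpha$; hence $H_v(\cdot;\mathbf s)^{-1}=q_v^{\varphi_{\mathbf s}(n)}$ with $\varphi_{\mathbf s}=\sum_\alpha s_\alpha\phi_\alpha$. The support of $\delta_v$ is cut out by the subfan $\Sigma_U\subset\Sigma$ of cones whose rays all belong to $\mathscr A_U$: for $n$ in the relative interior of such a cone $\sigma$, the reduction of $x_n$ lies in the orbit $O_\sigma\subset\mathscr U_{k_v}$, whereas cones containing a ray $\rho_\alpha$ with $\alpha\in\mathscr A_D$ have $O_\sigma\subset\bar D$ and are excluded.

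The integral therefore reduces, up to a volume factor, to the finite sum over $\sigma\in\Sigma_U$ of $\sum_{n\in\sigma^{\circ}\cap N_v}q_v^{\varphi_{\mathbf s}(n)}\chi_v(x_n)$. On a smooth cone $\sigma$ with primitive generators $\rho_{\alpha_1},\dots,\rho_{\alpha_k}$, $\varphi_{\mathbf s}$ is linear and each $n\in\sigma^{\circ}\cap N_v$ has a unique expression $\sum_i a_i\rho_{\alpha_i}$ with $a_i\in\Z_{\ge 1}$, so the inner series factors as a product of $k$ geometric series in $q_v^{s_{\alpha_i}}\chi_v(\rho_{\alpha_i})$. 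Each such factor converges absolutely in a half-space on $\Re(s_{\alpha_i})$; intersecting these half-spaces over the rays of all cones in $\Sigma_U$ yields exactly the tube domain of the lemma, and holomorphy follows from rationality of each factor together with uniform convergence on compact subsets. The main technical obstacle is the treatment of the finite set of bad places: there $K_v$ must be taken strictly smaller than $T(\mathfrak o_v)$, and the support of $\delta_v$ is described only by a finite subdivision of $\Sigma$; nevertheless the dominant cone-wise geometric estimates at large $|n|$ are unchanged and the tube domain of convergence is the same.
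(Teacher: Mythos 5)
Your treatment of the second assertion (existence of $K_v$ and vanishing of the Fourier transform on ramified characters) is exactly the paper's: local constancy of the metrics and of $\delta_v$ gives $K_v$-invariance of the integrand, translating by a nontrivial $k_0\in K_v$ forces the integral to vanish, and spreading out $(\mathscr X,\mathscr U)$ to a toric scheme over a dense open of $\Spec\mathfrak o_F$ allows $K_v$ to be the maximal compact for almost all $v$.

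For the first assertion you take a genuinely different route. The paper disposes of convergence and holomorphy by citing the general theory of geometric Igusa integrals from the companion paper. You instead give a direct toric-combinatorial argument: pass via $\log_v$ to a sum over the lattice $N_v$, use the piecewise-linearity of the local height on the fan, and, after restricting to cones all of whose rays lie in $\mathscr A_U$, factor each cone as a product of geometric series. This is more elementary and also more informative than a citation --- carried out in full it essentially reproves the explicit Denef-type formula of Lemma~\ref{lemma.denef} --- and it makes transparent that the convergence constraints arise exactly from the rays of $U\setminus T$, i.e.\ from the $\alpha\in\mathscr A_U$, which is the substantively correct region.

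Two points should be tightened. First, a sign: with $\|\mathsec f_\alpha(x_n)\|=q_v^{-\phi_\alpha(n)}$ and $\phi_\alpha\ge 0$, the cone-wise series you write have common ratio $q_v^{s_\alpha}$, which converges only for $\Re(s_\alpha)<0$; the ratio you actually want, consistently with the paper's height convention and with the denominators $q_v^{f_\alpha s_\alpha}-1$ in Lemma~\ref{lemma.denef}, is $q_v^{-s_\alpha}$, so the half-planes are $\Re(s_\alpha)>0$. Second, at the finitely many bad places the description of the support of $\delta_v$ by a subdivision of the fan is not available for an arbitrary flat $\mathfrak o_v$-model $\mathscr U$; the only fact one can invoke (and it suffices) is that this support is relatively compact in $U(F_v)$, so that all sufficiently large $n$ in it lie in cones of $\Sigma_U$, where your geometric bounds apply. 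As written, the phrase ``described only by a finite subdivision of $\Sigma$'' overstates what is known and should be replaced by this weaker relative-compactness argument. Also note that when $K_v$ is strictly smaller than the maximal compact, $T(F_v)/K_v$ is not a sublattice of $N_v$ but a finite extension of one; this does not affect the estimate.
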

\begin{proof}
The first part is a special case of our results
concerning geometric Igusa functions (\cite{chambert-loir-tschinkel2010}, Section~\ref{ss.geometric-igusa}).
For the second, observe that we assumed the metrics to be locally
constant at finite places, and the same holds
for the characteristic function of the set of local integral
points. As a consequence, there exists
a compact open subgroup~$K_v$ ot~$T(F_v)$ such that the height
function $H_v(\mathbf s;\cdot)$ is $K_v$-invariant.
It follows that the Fourier transform vanishes at any character
which is not trivial on~$K_v$. Moreover, the adelic 
condition on the metrics, and the fact that
the chosen integral model of the toric varieties~$U$ and~$X$ are toric schemes
over a dense open subset of~$\Spec(\mathfrak o_F)$,
imply that for almost all~$v$, one
can take~$K_v$ to be the maximal compact subgroup of~$T(F_v)$.
\end{proof}

\begin{lemm}\label{lemma.denef}
For almost all finite places~$v$, and all $\mathbf s$
such that $\Re(s_\alpha)>0$ for all $\alpha\not\in\mathscr A_U$,
one has
\[
 \hat H_v(\mathbf 1;\mathbf s)  = \tau_v(T)^{-1} q_v^{-\dim X} \sum_{A\subset\mathcal A_U}
     \# D_A^\circ(k_v) \prod_{\alpha\in A} \frac{q_v^{f_{\alpha,v}}-1}{q_v^{f_{\alpha,v}s_\alpha}-1}.
\]
\end{lemm}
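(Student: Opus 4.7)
I would prove this by an explicit $p$-adic integration, of the sort pioneered by Denef, exploiting the fact that at almost all places everything in sight has good reduction.

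First, I would fix an almost-all place $v$, so that $\mathscr U$ extends to an open subscheme of a smooth proper toric model $\mathscr X$ over $\mathfrak o_v$, the toric boundary $\mathscr X \setminus \mathscr T$ is a strict normal crossings divisor, the chosen metrics arise from this model, and $\delta_v$ is the characteristic function of $\mathscr U(\mathfrak o_v)$. By definition,
\[
\hat H_v(\mathbf 1;\mathbf s) = \tau_v(T)^{-1}\int_{T(F_v)\cap\mathscr U(\mathfrak o_v)} H_v(x;\mathbf s)^{-1} \,\abs{\mathrm dx}_v,
\]
so the prefactor $\tau_v(T)^{-1}$ peels off at once.

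The plan is to stratify the domain of integration according to reduction modulo~$\varpi_v$: a point $x\in\mathscr U(\mathfrak o_v)\cap T(F_v)$ lies in $\mathscr X(\mathfrak o_v)$, and its reduction $\bar x$ lies in $D_A^\circ(k_v)$ for a unique $A\subset\mathscr A_v$ such that $\bar x\notin D_\beta$ for any $\beta\in\mathscr A_D$; integrality of~$x$ in $\mathscr U$ forces $A\subset\mathscr A_U$. For each such $\bar x$, the smoothness of the toric model and Hensel's lemma give a formal neighborhood isomorphic to a product: one takes coordinates $(x_\alpha)_{\alpha\in A}$ in which each $x_\alpha$ is naturally an element of $\mathfrak o_{F_{\alpha,v}}$ (the $x_\alpha$ collectively provide $\sum_{\alpha\in A} f_{\alpha,v}$ coordinates over $F_v$) such that the local equation of $D_\alpha$ is $\mathrm N_{F_{\alpha,v}/F_v}(x_\alpha) = 0$, together with transverse coordinates~$y$ ranging over $\mathfrak m_v^{\dim X - \sum_{A} f_{\alpha,v}}$. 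In these coordinates $\norm{\mathsec f_\alpha(x)}_v = \abs{\mathrm N(x_\alpha)}_v$ for $\alpha\in A$ (a unit at almost all places), $\norm{\mathsec f_\alpha(x)}_v = 1$ for $\alpha\notin A$, and since $\div(\mathrm dx) = -\sum_{\mathscr A}D_\alpha$, the invariant measure factors as $\abs{\mathrm dx}_v = \prod_{\alpha\in A}\abs{\mathrm N(x_\alpha)}_v^{-1}\abs{\mathrm dx_\alpha}_{\alpha,v}\abs{\mathrm dy}_v$ on the residue disc of~$\bar x$.

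The integral over this residue disc therefore decouples. The transverse integral contributes $q_v^{-(\dim X-\sum_A f_{\alpha,v})}$, and each $\alpha\in A$ contributes a standard local Igusa integral
\[
J_\alpha(s_\alpha) \;=\; \int_{x_\alpha\in\mathfrak m_{F_{\alpha,v}}} \abs{\mathrm N(x_\alpha)}_v^{-s_\alpha-1}\,\abs{\mathrm dx_\alpha}_{\alpha,v},
\]
which by breaking $\mathfrak m_{F_{\alpha,v}}$ into level sets $\{v_{\alpha,v}(x_\alpha)=n\}$ of measure $q_v^{-f_{\alpha,v}n}(1-q_v^{-f_{\alpha,v}})$ and summing the resulting geometric series in $q_v^{-f_{\alpha,v}s_\alpha}$ evaluates to $q_v^{-f_{\alpha,v}}\,(q_v^{f_{\alpha,v}}-1)/(q_v^{f_{\alpha,v}s_\alpha}-1)$. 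Summing over $\bar x\in D_A^\circ(k_v)$ produces the combinatorial factor $\# D_A^\circ(k_v)$, and combining the $q_v$-powers gives the global prefactor $q_v^{-\dim X}$, which yields the claimed formula after summing over $A\subset\mathscr A_U$.

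The only real obstacle will be step by step bookkeeping: matching conventions for the measure on $F_{\alpha,v}$ (the unramified hypothesis at almost all $v$ ensures $\mathfrak o_{F_{\alpha,v}}$ has measure~$1$ and $q_{F_{\alpha,v}} = q_v^{f_{\alpha,v}}$), verifying that the unit factors entering the local equations and the smooth metrics contribute trivially at almost all $v$, and checking that the exponents match the definition of $H_v$ in the \emph{loc.\ cit.}\ normalization. Once these are aligned, the structure of the proof is the standard toric Denef-type stratification.
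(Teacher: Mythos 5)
Your proposal is correct and follows essentially the same route as the paper: fix a good-reduction model, stratify $\mathscr U(\mathfrak o_v)$ by the reduction modulo $\varpi_v$, use the normal-crossings toric structure to write the integral over each residue disc as a product of local Igusa integrals, and sum. One small slip: the integrand in your displayed $J_\alpha$ should read $\abs{\mathrm N(x_\alpha)}_v^{s_\alpha-1}$ rather than $\abs{\mathrm N(x_\alpha)}_v^{-s_\alpha-1}$ (the latter diverges for $\Re(s_\alpha)>0$); your subsequent geometric-series evaluation and stated value $q_v^{-f_{\alpha,v}}(q_v^{f_{\alpha,v}}-1)/(q_v^{f_{\alpha,v}s_\alpha}-1)$ are those of the correct exponent, so this is only a typo and the final bookkeeping of $q_v$-powers lines up exactly with the lemma.
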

\begin{proof}
Let $\mathscr X$ be a flat projective $\mathfrak o_F$-scheme with
generic fiber~$X$ and $\mathscr D$ the Zariski closure
of~$D$ in~$\mathscr X$. There exists a finite set of places~$S$
in~$\Val(F)$ such that, after restriction to $\mathfrak o_{F,S}$,
$\mathscr X$ is a smooth toric scheme, $\mathscr X\setminus\mathscr D$ is
equal to~$\mathscr U$, and all local metrics are defined by
the given model.

For $v\not\in S$, one may compute $\hat H_v(\mathbf 1;\mathbf s)$
using Denef's formula.  Using the good reduction hypothesis,
the set of integral points in $T(F_v)$ is equal to $T(F_v)\cap\mathscr U(\mathfrak o_v)$. Moreover, $U(\mathfrak o_v)\cap (U\setminus T)(F_v)$
has measure zero with respect to the measure~$\mu_{X,v}$.
We thus can split the integral over the residue classes,
introduce local coordinates, and write
\begin{align*}
 \hat H_v(\mathbf 1;\mathbf s) & = 
   \int_{T(F_v)} H_v(x;\mathbf s)^{-1} \delta_v(x) \mathrm d\mu_{T,v}(x) \\
&= \tau_v(T)^{-1} \int_{\mathscr U(\mathfrak o_v)}
       H_v(x;\mathbf 1-\mathbf s) \mathrm d\mu_{X,v} (x)\\
& = \tau_v(T)^{-1} \sum_{A\subset\mathcal A_U}
          \# D_A^\circ(k_v) q_v^{\Card{A}-\dim(X)} \prod_{\alpha\in A} \int_{\mathfrak m_{\alpha,v}}\abs{\mathrm N_{F_{\alpha,v}/F_\alpha}(x_\alpha)}^{s_\alpha-1}  \,\mathrm dx_\alpha \\
& = \tau_v(T)^{-1} q_v^{-\dim (X)} \sum_{A\subset\mathcal A_{U}}
     \# D_A^\circ(k_v) \prod_{\alpha\in A} \frac{q_v^{f_{\alpha,v}}-1}{q_v^{f_{\alpha,v} s_\alpha}-1}.
\end{align*}
(See \cite{chambert-loir-tschinkel2010}, 4.1.6, for more details.)
\end{proof}

\subsection{The product of local Fourier transforms at finite places}

Let $K_{H,\fin}=\prod_{v\nmid\infty} K_v$ be the product of compact
open subgroups given by Lemma~\ref{lemma.compact.open}
and $T(\AD_F)^*_K\subset T(\AD_F)^* $ the subgroup of characters
whose restriction to~$K_{H,\fin}$ is trivial. Let $S$ be the finite
set of those finite places~$v$ such that either $K_v$ is distinct from
the maximal compact subgroup of~$T(F_v)$ or 
Lemma~\ref{lemma.denef} fails for $v$.

For any character $\chi\in T(\AD_F)^*_K$ and
$\mathbf s\in\C^{\mathscr A}$ such that $\Re(s_\alpha)>0$
for $\alpha\not\in\mathscr A_U$, define
\[ 
\hat{H}_{\fin }(\chi;\mathbf s) = 
\prod_{v\nmid\infty}
\hat H_v(\chi_v;\mathbf s)=
\prod_{v\nmid\infty}
\int_{T(F_v)} H_v(x_v;\mathbf s)^{-1} \delta_v(x_v) \chi_v(x_v)\mathrm d\mu_{T,v}(x_v). 
\] 
In this section, we study the convergence 
of this infinite product
and its analytic properties with respect to~$\mathbf s$ and~$\chi$.

\begin{lemm}
\label{lemm:toric-outside-S}
Let $\Omega\subset\R^{\mathscr A}$ be the open subset
of all~$\mathbf s\in\mathbf R^{\mathscr A}$
such that $s_\alpha>1/2$ for $\alpha\in\mathscr A_U$.
The infinite product~$\hat H_{\fin}(\chi;\mathbf s)$ converges
whenever $\Re(s_\alpha)>1$ for all $\alpha\in\mathscr A_U$
and extends to a meromorphic function of~$\mathbf s\in\Tube(\Omega)$.

More precisely, for each $\chi\in T(\AD_F)^*_K$,
there exists a holomorphic function $\phi(\chi;\cdot)$
on~$\Tube(\Omega)$ such that 
\[ 
\hat{H}_{\fin }(\chi;\mathbf s)  =\phi(\chi;\mathbf s)
  \prod_{\alpha \in \mathscr A_U} \mathrm L(s_{\alpha}, \chi_{\alpha}).
\]
Moreover, for any positive real number~$\eps$, there exists~$C(\eps)$
such that
$\abs{\phi(\chi;\mathbf s)}\leq C(\eps)$ for any character~$\chi$
and any $\mathbf s\in\Omega$ such that $\Re(s_\alpha)>\frac12+\eps$
for all $\alpha\in\mathscr A_U$.
\end{lemm}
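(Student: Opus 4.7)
The strategy is to separate the finitely many bad places in $S$ from the good places $v \notin S$, where Lemma~\ref{lemma.denef} yields an explicit local formula that can be twisted by $\chi_v$, and then to extract the Hecke L-factors and control the remainder by Lang--Weil type estimates.

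First, for each $v \in S$, the Fourier transform $\hat H_v(\chi_v;\mathbf s)$ is holomorphic on the full tube $\{\Re(s_\alpha)>0 \; \forall \alpha\in\mathscr A_D\}$ by Lemma~\ref{lemma.compact.open}, and the absolute value of the integrand is at most $|H_v(x;\mathbf s)^{-1} \delta_v(x)|$, which makes the finite product over $S$ uniformly bounded in~$\chi$ on any such tube. Moreover, since the Denef-type formula of Lemma~\ref{lemma.denef} at $v\notin S$ involves only the variables $s_\alpha$ with $\alpha\in\mathscr A_U$, the dependence of $\hat H_{\fin}$ on $s_\alpha$ for $\alpha\in\mathscr A_D$ is carried entirely by this harmless finite product.

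For $v\notin S$, the character $\chi_v$ is trivial on the maximal compact subgroup $K_v$, hence unramified. I would rerun the computation of Lemma~\ref{lemma.denef} with $\chi_v$ inserted in the integrand: in local coordinates near each stratum $D_A^\circ$ (for $A\subset\mathscr A_U$), the character factors via the morphism $T\to\prod_\alpha T_\alpha$ as $\prod_{\alpha\in A}\chi_{\alpha,v}(x_\alpha)$, and the twisted local integral
\[
    \int_{\mathfrak m_{\alpha,v}} \bigl|\mathrm N_{F_{\alpha,v}/F_v}(x_\alpha)\bigr|^{s_\alpha-1}\chi_{\alpha,v}(x_\alpha)\,\mathrm d x_\alpha
\]
evaluates, by a geometric series over the places $w$ of $F_\alpha$ above $v$, to a product of local Euler factors of $\mathrm L(s_\alpha,\chi_\alpha)$ multiplied by an explicit rational function of $q_v^{-s_\alpha}$ and $\chi_{\alpha,v}(\varpi_w)$.

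Setting
\[
    \phi(\chi;\mathbf s) = \hat H_{\fin}(\chi;\mathbf s)\prod_{\alpha\in\mathscr A_U}\mathrm L(s_\alpha,\chi_\alpha)^{-1},
\]
the plan is to show that the local factor of $\phi$ at each $v\notin S$ equals $1+O(q_v^{-1-2\eps})$ whenever $\Re(s_\alpha)>\tfrac12+\eps$ for all $\alpha\in\mathscr A_U$. The estimate uses $\tau_v(T)^{-1}=\mathrm L_v(1,\bar M)$ together with the Lang--Weil bound $\#D_A^\circ(k_v)=q_v^{\dim D_A^\circ}+O(q_v^{\dim D_A^\circ-1/2})$; the subsets $A$ with $\Card A\ge 2$ contribute at worst $O(q_v^{-2\Re(s_\alpha)})$, while the singletons combine with the $A=\varnothing$ term and with $\tau_v(T)^{-1}$ to isolate precisely the Euler factor $\prod_\alpha\mathrm L_v(s_\alpha,\chi_\alpha)$. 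Since $|\chi_{\alpha,v}(\varpi_w)|=1$, the Lang--Weil error is purely geometric and independent of $\chi$, which yields both the absolute convergence of the Euler product defining $\phi$ on $\Tube(\Omega_\eps)$ and the desired uniform bound $C(\eps)$. The main technical obstacle will be this last bookkeeping step: carefully matching the twisted Denef sum against $\prod_\alpha\mathrm L_v(s_\alpha,\chi_\alpha)$ and verifying that the remaining local error is indeed $O(q_v^{-1-2\eps})$ on the stated tube, uniformly in the character.
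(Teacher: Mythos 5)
Your proposal is correct in outline and reaches the right endpoint, but it takes a genuinely different, more computational route from the paper's. The paper exploits a shortcut you don't use: for $v\notin S$ the unramified character $\chi_v$ can be absorbed into an imaginary shift of the parameter, $\chi_v(x)=H_v(x;-\mathrm i m(\chi_v))$, whence
\[
\hat H_v(\chi_v;\mathbf s)=\hat H_v(\mathbf 1;\mathbf s+\mathrm i m(\chi_v)),
\qquad
\mathrm L_v(s_\alpha,\chi_\alpha)=\zeta_{F_\alpha,v}\bigl(s_\alpha+\mathrm i m(\chi_\alpha)\bigr).
\]
This identity reduces the twisted local factor $\phi_v(\chi_v;\cdot)$ to the untwisted $\phi_v(\mathbf 1;\cdot)$ evaluated at a shifted argument, and the only estimate needed is the Lang--Weil bound already contained in Lemma~\ref{lemma.denef}. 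You instead propose to rerun Denef's formula with $\chi_v$ inserted in the integrand, evaluate the twisted local integrals place by place over $F_\alpha$, and then match the resulting rational expression against $\prod_\alpha \mathrm L_v(s_\alpha,\chi_\alpha)$. This works --- since $|\chi_{\alpha,v}(\varpi_w)|=1$ the error terms are indeed character-independent, so your uniform bound $C(\eps)$ comes out the same --- but you correctly flag the ``bookkeeping step'' as a real obstacle: matching the twisted Denef sum against the Euler product must be done carefully, and this is precisely the work that the paper's substitution trick avoids entirely. Everything else (isolating the bad places $S$, holomorphy and boundedness there via Lemma~\ref{lemma.compact.open}, observing that only the $s_\alpha$ with $\alpha\in\mathscr A_U$ appear at good places, Lang--Weil giving $\phi_v=1+O(q_v^{-\min(1+2\eps,3/2)})$, convergence of the infinite product and the final meromorphic continuation via the half-plane of convergence of the Hecke L-functions) is the same as in the paper. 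One cosmetic caution: you cite the convergence region from Lemma~\ref{lemma.compact.open} as $\Re(s_\alpha)>0$ for $\alpha\in\mathscr A_D$; the substantive constraint for absolute convergence of the local integral is on the $\alpha\in\mathscr A_U$ (the height blows up precisely at the boundary components that \emph{remain} in $U$, while $\delta_v$ cuts off approach to $D$), so be careful to read that lemma's hypotheses correctly when you write the argument out.
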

(In that formula, $\chi_\alpha$ is the Hecke character
of~$F_\alpha$ deduced from~$\chi$, as in Section~\ref{sec.toric-var}.)
\begin{proof}
This is a slight modification of the proof provided in \cite{batyrev-t95b}, in
the projective case. 
For any finite place~$v\in\Val(F)$, let us define a function $\phi_v$
on~$\Tube(\Omega)$ by the formula  
\[\hat H_v(\chi_v;\mathbf s) 
= \phi_v(\chi_v;\mathbf s) \prod_{\alpha\in\mathcal A_U} \mathrm L_v(s_\alpha,\chi_\alpha), \]
where $\chi_v$ is the local component at~$v$ of a character~$\chi\in T(\AD_F)^*_K$.

For $v\not\in S$, $\chi_v$ is unramified and hence takes the form
\[ 
\chi_v(x)=H_v(x;-\mathrm im(\chi_v)),
\] 
for some $m(\chi_v)\in M_v$, where we used the 
injection~\eqref{eqn:mv} to embed $M_v$ into $\Z^{\mathscr A_v}$.
Consequently, for any such~$v$, one has
\begin{align*} \hat H_v(\chi_v;\mathbf s)  
& = \int_{T(F_v)} H_v(x;\mathbf s)^{-1}\chi_v(x)\delta_v(x)\,\mathrm d\mu_{T,v}(x)\\
& = \int_{T(F_v)} H_v(x;\mathbf s+\mathrm im(\chi_v))^{-1}\delta_v(x)\,\mathrm d\mu_{T,v}(x)\\
& = \hat H_v(\mathbf 1;\mathbf s+\mathrm im(\chi_v)).\end{align*}
Observe that
\[ \mathrm L_v(s_\alpha,\chi_\alpha)=\mathrm L_v(s_\alpha+\mathrm i m(\chi_\alpha),\mathbf 1) = \zeta_{F_\alpha,v}(s_\alpha+\mathrm i m(\chi_\alpha)).\]
Lemma~\ref{lemma.denef} implies that $\phi_v$ is
holomorphic on its domain. Moreover, for any positive
real number~$\eps$,
there is an upper bound of the form
\[ \abs{\phi_v(\chi;\mathbf s)-1 }\ll q_v^{-\min(1+2\eps,3/2)} \]
for all~$\mathbf s$ such that $\Re(s_\alpha)>\frac12+\eps$ 
for $\alpha\in\mathscr A_U$.

For $v\in S$, 
Lemma~\ref{lemma.compact.open} implies that
$\hat H_v(\chi_v;\cdot)$ is holomorphic and uniformly bounded in
this domain, independently of~$\chi$; 
the same holds for~$\phi_v(\chi_v;\cdot)$.

These estimates
imply the uniform and absolute convergence of the infinite product
$\prod \phi_v(\chi;\cdot)$ on the tube domain~$\Tube(\Omega)$;
it defines a holomorphic function $\phi(\chi;\cdot)$ on this domain.
Moreover, for any positive real number~$\eps$, there exists
a constant~$C(\eps)$ such that
\[ \abs{\phi(\chi;\mathbf s)} \ll C(\eps) \]
whenever $\Re(s_\alpha)>\frac12+\eps$ 
for $\alpha\in\mathscr A_U$.

Let $\Omega_1$ be the subset of~$\Omega$ 
defined by the inequalities~$s_\alpha>1$ for all~$\alpha\in\mathscr A_U$.
Since Hecke L-functions converge when their parameter has real part~$>1$,
the infinite product
$\hat H_{\fin }(\chi;\mathbf s)$ converges 
absolutely on~$\Tube(\Omega_1)$
and defines  a holomorphic function on this tube domain
such that 
\[ \hat H_{\fin } (\chi;\mathbf s) = \phi(\chi;\mathbf s)
 \prod_{\alpha\in\mathscr A_U} \mathrm L(s_\alpha,\chi_\alpha). \]
This provides the asserted meromorphic continuation of~$\hat H_\fin$.
\end{proof}

%
%

\subsection{Fourier transforms at archimedean places}

To establish analytic properties
of Fourier-transforms at archimedean places,
we will extend the technique
of geometric Igusa integrals developed
in~\cite{chambert-loir-tschinkel2010}.

Fix an archimedean place~$v$ of~$F$.
Since the $F$-rational divisors~$D_\alpha$  may decompose
over~$F_v$, it is natural to consider
the local height function~$H_v$ and its Fourier transform as functions
of the complex parameter~$\mathbf s\in\C^{\mathscr A_v}$.
This generalization will in fact be required
in the following sections.

Fix a splitting of the exact sequence 
\[ 0\ra T(F_v)^1\ra T(F_v)\ra M_v \ra 0 .\]
Each character of~$T(F_v)$ can now be viewed as
a pair $(\chi_1,m)$ of a character of the compact torus~$T(F_v)^1$
and an element $m\in M_v$. 
Similarly, for each $\alpha\in\mathscr A_v$, 
let $F_{\alpha,1}$ be the subgroup of $F_\alpha^\times$
consisting of elements of absolute value~$1$.
We decompose the field
$F_\alpha^\times=\R_+^\times\times F_{\alpha,1}$
and decompose the character~$\chi_\alpha$ accordingly, writing
\[
\chi_\alpha(x_\alpha)=\abs{x_\alpha}^{-\mathrm im_\alpha} \chi_{\alpha,1}(x_\alpha),
\] 
where $\chi_{\alpha,1}$ is a character of~$F_{\alpha,1}$.

\begin{lemm}
\label{lemm:toric-in-S}
Let $v$ be an archimedean place of~$F$. 
Let $\Omega_v$ be the open subset of~$\R^{\mathscr A_v}$
defined by the inequalities
$s_\alpha> -1/2$ for all $\alpha\in\mathscr A_v$,
Then, for each face~$A$
of the analytic Clemens complex~$\Clan_v(D)$, there exists
a function~$\phi_{v,A}(\mathbf s,\chi_v)$
holomorphic on the tube domain~$\Tube(\Omega_v)$
of~$\mathbf C^{\mathscr A_v}$
such that
\[
\hat{H}_v (\chi_v;\mathbf s)
=
 \sum_{A\in\Clan_v(D)} \phi_{v,A}(\chi_v;\mathbf s) 
\prod_{\alpha\in A} \frac1{s_\alpha+\mathrm im_\alpha} .
\]
Moreover, each function~$\phi_{v,A}$ is rapidly decreasing
in vertical strips; namely,
for any positive real number~$\kappa$,
one has 
\[ \abs{\phi_{v,A}(\chi_v;\mathbf s)} \ll  (1+\norm{\Im(\mathbf s)}+\norm{m(\chi_v)}+ \norm{\chi_{v,1}})^{-\kappa}, \]
provided that $\mathbf s\in\Tube(\Omega_v)$.

Assume that $\mathbf s$ and~$A$ are such that 
$s_\alpha=0$ for $\alpha\in A$. If there
exists an $\alpha\in A$ such that $\chi_\alpha$ is ramified,
then $\phi_{v,A}(\chi_v;\mathbf s)=0$.
\end{lemm}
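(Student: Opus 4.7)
The plan is to analyze $\hat H_v(\chi_v;\mathbf s)$ by localizing the integral over $T(F_v)$ via a smooth partition of unity on the compact space $X(F_v)$ and reducing each local piece to a geometric Igusa integral in the sense of the companion paper \cite{chambert-loir-tschinkel2010}. First, cover $X(F_v)$ by finitely many neighborhoods $U_x$ of the type described in Section~2.8, equipped with smooth coordinates $(y_\alpha)_{\alpha\in A_x}$ (with $A_x\subseteq\mathscr A_v$ the set of boundary components through $x$) whose common vanishing locus is $D_{A_x}$. Organize these charts so that each face $(A,Z)\in\Clan_v(D)$ is covered by charts centered at points of the corresponding open stratum $Z^\circ$. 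Choosing a subordinate partition of unity $(\psi_x)$, one writes $\hat H_v(\chi_v;\mathbf s)$ as a finite sum of integrals
\[ \int_{U_x\cap T(F_v)} \prod_{\alpha\in A_x}\norm{\mathsec f_\alpha(y)}^{-s_\alpha}\,\chi_v(y)\,\psi_x(y)\, d\mu_{T,v}(y). \]
Using the formula $\div(dx)=-\sum_\alpha D_\alpha$, the measure $\mu_{T,v}$ is, up to a smooth nonvanishing factor, the product of $\prod_{\alpha\in A_x}\abs{\mathrm N(y_\alpha)}^{-1}\,dy_\alpha$ with a smooth measure in the transversal directions.

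The key one-variable computation is the Mellin-type integral, for each $\alpha\in A_x$,
\[ I_\alpha(s_\alpha,\chi_\alpha) = \int_{F_{\alpha,v}^\times} \abs{y_\alpha}^{s_\alpha-1}\chi_\alpha(y_\alpha)\, g(y_\alpha)\, dy_\alpha, \]
where $g$ is smooth and compactly supported. Decomposing $F_{\alpha,v}^\times \simeq \R_+^\times \times F_{\alpha,v,1}$ and writing $\chi_\alpha(y_\alpha)=\abs{y_\alpha}^{-\mathrm im_\alpha}\chi_{\alpha,1}(y_\alpha)$, subtracting the value of $g$ at the origin produces the polar factor $1/(s_\alpha+\mathrm im_\alpha)$ times a remainder integral which, thanks to the subtraction, converges absolutely for $\Re(s_\alpha)>-1/2$ and is holomorphic there. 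Multiplying over $\alpha\in A_x$ and performing the remaining smooth integration over the transversal coordinates yields exactly the announced decomposition, with $\phi_{v,A}(\chi_v;\mathbf s)$ given by a finite sum (over charts with $A_x=A$) of holomorphic functions on $\Tube(\Omega_v)$.

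Rapid decay in vertical strips follows from integration by parts: the integrand is smooth and compactly supported in each chart, differentiating the characters in the torus directions produces factors $m(\chi_v)$ and $\chi_{v,1}$, while the $\Im(\mathbf s)$-decay is obtained by writing $\abs{y_\alpha}^{s_\alpha}=s_\alpha^{-1}\partial_{y_\alpha}(y_\alpha\abs{y_\alpha}^{s_\alpha-1}\cdot\text{sign})$ and iterating against the cutoff $\psi_x$; this gives polynomial decay of any order uniformly on vertical strips inside $\Tube(\Omega_v)$. The vanishing statement when $s_\alpha=0$ and $\chi_\alpha$ is ramified for some $\alpha\in A$ reduces to the observation that at $s_\alpha=0$ the polar factor is $1/(\mathrm im_\alpha)$ multiplied by the value $g(0)\int_{F_{\alpha,v,1}}\chi_{\alpha,1}(u)\,du$, and this last integral vanishes on a compact group as soon as $\chi_{\alpha,1}$ is nontrivial.

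The main obstacle is the bookkeeping: one must verify that after summing over the partition of unity, the contributions genuinely collect into one holomorphic factor $\phi_{v,A}$ per face of $\Clan_v(D)$, with the auxiliary choices (partition, local coordinates, splitting of $T(F_v)\to M_v$) affecting only the transversal smooth function. The delicate technical point is to extend $\phi_{v,A}$ holomorphically all the way to $\Re(s_\alpha)>-1/2$; this requires subtracting not only the leading Taylor coefficient but doing so compatibly across all indices $\alpha\in A$, and keeping track of how the decomposition of $\chi_v$ on $T(F_v)$ restricts to the coordinate subtori $\mathrm{Res}_{F_{\alpha,v}/F_v}(\gm)$. Uniformity of the rapid decay across the finitely many charts is routine but needs to be stated carefully, since the implicit constants depend on derivatives of the partition of unity and on $\mathbf s$ only through compact subsets of $\Omega_v$.
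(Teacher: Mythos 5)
Your argument follows the same overall strategy as the paper's: localize $\hat H_v$ via a partition of unity subordinate to toric charts indexed by faces of $\Clan_v(D)$, reduce to Mellin-type one-variable integrals by splitting each $F_\alpha^\times$ into a radial and angular part, isolate the pole at $s_\alpha+\mathrm i m_\alpha=0$, and observe that the residue picks up $\int_{F_{\alpha,1}}\chi_{\alpha,1}$, which kills the leading coefficient when $\chi_\alpha$ is ramified. There are two points where the paper's route differs from yours in a way worth flagging. First, for the meromorphic continuation you subtract the value of the transversal function at the origin, while the paper instead performs iterated integration by parts via $t^{s+\mathrm i m}=(s+\mathrm i m)^{-1}\partial_t(t^{s+\mathrm i m})$; these are essentially equivalent in one variable, but the integration-by-parts formulation handles the multi-index bookkeeping (the ``compatibility across all $\alpha\in A$'' you rightly single out as the delicate point) more mechanically. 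Second, and more importantly, for the rapid decay in $\Im(\mathbf s)$ the paper does \emph{not} rely on chart-by-chart integration by parts against the cutoff (which produces derivatives of the partition of unity and does not obviously yield decay uniformly across the decomposition into $\phi_{v,A}$-pieces); instead it integrates by parts against \emph{globally defined} $T(F_v)$-invariant vector fields on $X(F_v)$, using the key structural fact (Proposition~2.2 of~\cite{chambert-loir-t2002}) that $\mathfrak d_X(H(\cdot;\mathbf s))H(\cdot;\mathbf s)^{-1}$ is a linear form in $\mathbf s$ times a regular function on~$X$. This is what makes the decay estimate in all three parameters $\Im(\mathbf s)$, $m(\chi_v)$, $\chi_{v,1}$ clean; your local argument is plausible but glosses over the interaction between the pole in $s_\alpha+\mathrm i m_\alpha$ and the desired decay in $\norm{\Im\mathbf s}+\norm{m(\chi_v)}$. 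Finally, a small imprecision: in the vanishing statement, the object that vanishes is the coefficient $\phi_{v,A}$ (which contains the factor $\int_{F_{\alpha,1}}\chi_{\alpha,1}$), not ``the polar factor''; the phrasing ``the polar factor is $1/(\mathrm i m_\alpha)$ multiplied by\ldots'' conflates the singular part $\prod(s_\alpha+\mathrm i m_\alpha)^{-1}$ with its holomorphic coefficient.
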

\begin{proof}
The proof is a variant of the analysis conducted 
in our paper~\cite{chambert-loir-tschinkel2010}.
Let us consider a partition of unity $(h_A)$, indexed
by the faces~$A\subset\mathscr A_v$
of the analytic Clemens complex~$\Clan_v(D)$ at~$v$
such that the only divisors~$D_\alpha$ which intersect
the support of~$h_A$ are those with index $\alpha\in A$.
Up to refining this partition of unity, 
we also assume that on the support of~$h_A$, there
is a smooth map $(x_\alpha)$ to $\prod_{\alpha\in A}F_\alpha$
such that for each~$\alpha\in A$,
$x_\alpha=0$ is a local equation of~$D_\alpha$.
By the theory of toric varieties,
we can moreover assume that the restriction
of the map $\mathbf x\mapsto x_\alpha$  to $T(F_v)$
is an algebraic character. Considering a complement torus,
we obtain a system of
local analytic coordinates~$(\mathbf x,\mathbf y)
=(x_\alpha)_{\alpha\in A},(y_\beta)_{\beta\in B}$.
In these coordinates, the character~$\chi_v$ can be expressed as
\[ \chi_v(\mathbf x) = \prod_{\alpha\in A}\chi_\alpha(x_\alpha)
 \times \chi^A(\mathbf y), \]
where $\chi^A$ is a character of~$T(F_v)$.

After the corresponding change of variables,
the integral, localized around the stratum~$D_A(F_v)$, takes the form
\[ \mathscr I_A(\chi_v;\mathbf s)= \int \prod_{\alpha\in A} \chi_\alpha(x_\alpha) \abs{x_\alpha}^{s_\alpha-1}
 \, \theta(\mathbf s,\mathbf x,\mathbf y) \chi^A(\mathbf y)
 \mathrm d\mathbf x\mathrm d\mathbf y, \]
where $\theta$ is a smooth function with compact support around the origin
in~$\prod_{\alpha\in A}F_\alpha \times F_v^{B}$.
The local integral~$\mathscr I_A$ takes the form
\[ \mathscr I_A(\chi_v;\mathbf s) = \int \prod_{\alpha\in A} \abs{x_\alpha}^{s_\alpha+\mathrm i m_\alpha-1} 
\left( \int \theta(\mathbf s,\mathbf x,\mathbf y)\chi^A(\mathbf y) \prod_{\alpha\in A}\chi_{\alpha,1}(x_{\alpha,1}) \prod_{\alpha\in A} \mathrm dx_{\alpha,1} \mathrm d\mathbf y
\right)
\prod \mathrm d\abs{x_\alpha} .
\] 
In the inner integral, the variables $x_{\alpha,1}$ run over $F_{\alpha,1}$, \ie,
$\{\pm 1\}$ or~$\mathbf S_1$, according to whether
$F_\alpha=\R$ or $F_\alpha=\C$. In the latter case, we first
perform integration by parts to establish the rapid decay
of the inner integral with respect to the discrete part~$\chi_{\alpha,1}$
of the character~$\chi_\alpha$.
Observe also that this inner integral tends to~$0$ when $\abs{x_\alpha}\ra 0$
if the character~$\chi_{\alpha,1}$ is nontrivial,
\ie, the character~$\chi_\alpha$ is unramified.

The stated meromorphic continuation can then be established,
\eg,  iteratively integrating by parts with respect 
to the variables~$\abs{x_\alpha}$ and
writing
 \[ t^{s+\mathrm im} = \frac1{s+\mathrm im} \frac\partial{\partial t} \left(t^{s+\mathrm im}\right). \]
This gives a formula as indicated, except for the rapid decay
in~$\mathbf s$. To obtain this, it suffices to perform
integration by parts with respect to invariant vector fields
in the definition of~$\hat H_v$. The point is that for any element~$\mathfrak d$
of~$\Lie(T(F_v))$, there exists a vector field~$\mathfrak d_X$ on~$X(F_v)$ 
whose restriction to~$T(F_v)$ is invariant; moreover,
$\mathfrak d_X(H(\mathbf x;\mathbf s)) H(\mathbf x;\mathbf s)^{-1}$
is a linear form in~$\mathbf s$ times a regular function on~$X(F_v)$ 
(see~\cite{chambert-loir-t2002}, Proposition~2.2).
\end{proof}

\subsection{Integrating Fourier transforms}

We now have to integrate the Fourier transform of
the height function over the group of automorphic characters.
For the analysis, it will be necessary to first enlarge the set of variables
and then restrict to a suitable  subspace.
We thus consider a variant of the height zeta function
depending on a variable 
\[ \tilde{\mathbf s}=(\mathbf s,(\mathbf s_v)_{v\mid\infty}) \in  V_\C, \]
where $V$ is the real vector space
\[ V= \Pic^T(X)_\R \oplus \bigoplus_{v\mid\infty} \Pic^T(X_v)_\R. \]
For $\tilde{\mathbf s}\in V_\C$ such that the series converges, we set:
\[ \tilde {\mathrm Z}(\tilde{\mathbf s}) =
 \sum_{x\in T(F)\cap  \mathscr U(\mathfrak o_{F})} 
 \prod_{v\nmid \infty} H_v(x;\mathbf s)^{-1}
\times
 \prod_{v\mid\infty} H_v(x;\mathbf s_v)^{-1}. \]
Formally, we again have the Poisson formula
\[
\tilde {\mathrm Z}(\tilde{\mathbf s})
= c_T \int \hat{H}(\chi;\tilde{\mathbf s}) 
 \mathrm d\chi,
\] 
the integral over the locally compact abelian group of characters of 
$T(\AD_F)/T(F)$ with respect to a chosen Haar measure $\mathrm d\chi$
on~$(T(\AD_F)/T(F))^*$, here, for $\tilde{\mathbf s}=(\mathbf s,(\mathbf s_v)) $,
\[ 
\hat{H}(\chi;\tilde{\mathbf s})=\hat H_{\fin }(\chi;\mathbf s)
  \prod_{v\mid\infty} \hat H_v(\chi_v;\mathbf s_v)\]
is the corresponding 
Fourier transform of the height function.
The constant~$c_T$ depends on the actual choice of measures,
which we now make explicit.

Fix 
a section of the surjective homomorphism $T(\AD_F)\ra M_\R^\vee$,
whose kernel~$T(\AD_F)^1$ contains $T(F)$. If $M_\R^\vee$
is endowed with the Lebesgue measure normalized by
the lattice~$M^\vee$, this gives rise to a Haar measure
on $T(\AD_F)^1$.
The section decomposes the group of automorphic characters
as $M_\R\oplus \mathscr U_T$.
Let~$\mathscr U_T^{K}=T(\AD_F)^*_K\cap \mathscr U_T$ 
be the subgroup of~$\mathscr U_T$
consisting of characters whose restriction to
the compact open subgroup~$K_{H,\fin }$ is trivial.
By Lemma~\ref{lemma.compact.open} (see also the beginning of Section~3.3),
the Fourier transform vanishes at any character~$\chi\in\mathscr U_T$
such that $\chi\not\in\mathscr U_T^K$.
We normalize the Haar measure~$\mathrm dm$ on~$M_\R$ by the lattice~$M$
and define
a Haar measure on~$(T(\AD_F)/T(F))^*$  
as the product of~$\mathrm dm$ by the counting measure on~$\mathscr U_T$.
Provided the expression in the right hand side converges absolutely, 
one can apply the Poisson summation formula and obtain
\[ \tilde Z(\tilde{\mathbf s})
 = \frac{c_T}{(2\pi)^{\rang M}} \sum_{\chi\in\mathscr U_T^K} \int_{M_\R} \hat H(\chi;\tilde {\mathbf s}+\mathrm i m)\,\mathrm dm, \]
with
\[ c_T =  \vol(T(\AD_F)^1/T(F))^{-1} .\]

We work in the tube domain~$\Tube(\Omega_{\tilde\rho})$
of $V_\C$ over the open subset $\Omega_{\tilde\rho}$
consisting of $\tilde{\mathbf s}\in V$ satisfying
the inequalities $s_\alpha>1$ for $\alpha\in\mathscr A_U$
and $s_{v,\alpha}>0$ for $\alpha\in\mathscr A_{D,v}$.
Lemmas~\ref{lemm:toric-outside-S}, \ref{lemm:toric-in-S}, and 
the moderate growth of Hecke L-functions in vertical strips
imply that the integrand decays rapidly, hence
the validity of the Poisson formula.
For any $\chi\in\mathscr U_T^K$ we set
\[ \tilde Z(\chi;\tilde{\mathbf s}) = \frac1{(2\pi)^{\rang M}}\int_{M_\R} \hat H(\chi;\tilde {\mathbf s}+\mathrm i m)\,\mathrm dm,\]
so that
\[ \tilde Z(\tilde{\mathbf s})= c_T \sum_{\chi\in\mathscr U_T^K} \tilde Z(\chi;\tilde{\mathbf s}).\]

We first analyze individually the functions~$\tilde Z(\chi;\tilde{\mathbf s})$,
for a fixed~$\chi$.
By Lemmas~\ref{lemm:toric-in-S} and~\ref{lemm:toric-outside-S}, one can write
\begin{multline*} \hat H(\chi;\tilde {\mathbf s}+\mathrm i m) \\
 = \phi(\chi_{\fin};\mathbf s+\mathrm im) 
 \prod_{\alpha\in\mathscr A_U} \mathrm L(s_\alpha+\mathrm im_\alpha;\chi_\alpha)\prod_{v\mid\infty} \sum_{A\in\Clanmax_v(D)} \frac{ \phi_{v,A}(\chi_{v,1};\mathbf s_v+\mathrm im+\mathrm im(\chi_v))}{ \prod_{\alpha\in A_v} (s_{v,\alpha}+\mathrm im+\mathrm im(\chi_{v,\alpha}))}. \end{multline*}
Let
\[
\Clanmax_\infty(D)=\prod_{v\mid\infty}\Clanmax_v(D).
\]
For any $A=(A_v)\in\Clanmax_\infty(D)$, set
\[ \hat H_A(\chi;\tilde {\mathbf s}) 
 = 
\phi(\mathbf s;\chi_{\fin}) 
 \prod_{\alpha\in\mathscr A_U} \mathrm L(s_\alpha;\chi_\alpha)\prod_{v\mid\infty} \frac{ \phi_{v,A_v}(\mathbf s_v+\mathrm im(\chi_v); \chi_{v,1})}{ \prod_{\alpha\in A_v} (s_{v,\alpha}+\mathrm im(\chi_{v,\alpha}))} \]
so that
\[ \hat H(\chi;\tilde {\mathbf s}) =
\sum_{(A_v)\in\Clanmax_\infty(D)}
\hat H_A(\chi;\tilde {\mathbf s}).\]
In turn, this decomposition of~$\hat H$ induces a decomposition
\[ \tilde Z(\chi;\tilde{\mathbf s}) = \sum_A\in\Clanmax_\infty(D) \tilde Z_A(\chi;\tilde{\mathbf s}), \]
where
\[\tilde Z_A(\chi;\tilde{\mathbf s})
= \frac1{(2\pi)^{\rang M}}\int_{M_\R}
\hat H_A(\chi;\tilde{\mathbf s}+\mathrm im)\,\mathrm dm. \]
We first analyse these series $\tilde Z_A$ separately.

For each $A\in\Clanmax_\infty(D)$, 
the function $\hat H_A(\chi;\tilde{\mathbf s})$ is a meromorphic function
on a tube domain, with poles given by affine linear forms
whose vector parts are real and linearly independent.
Now we apply a straightforward generalization of the
integration theorem~3.1.14 from~\cite{chambert-loir-tschinkel2001b},
where we only assume that the linear forms describing
the poles are linearly independent, rather than a basis of
the dual vector space.
The convergence is guaranteed by the rapid decay of
the functions~$\phi$ and~$\phi_{v,A_v}$ in vertical strips.

Let us set 
\[
 \Pic^T(U;A)= \Pic^T(U)\oplus\bigoplus_{v\mid\infty} \Z^{A_v}.
\]
There is a natural homomorphism $M\ra \Pic^T(U;A)$
and we define $\Pic(U;A)$ as the quotient $\Pic^T(U;A)/M$.

\begin{lemm}
The abelian group $\Pic(U;A)$ is torsion-free.
\end{lemm}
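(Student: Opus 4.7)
The plan is to apply the snake lemma, reducing the claim to the torsion-freeness of two smaller groups, each capturing a distinct part of the geometry.

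First, using the splitting $\Pic^T(U;A)=\Z^{\mathscr A_U}\oplus\bigoplus_{v\mid\infty}\Z^{A_v}$, I write the natural map $M\to\Pic^T(U;A)$ as $m\mapsto(m_1(m),m_2(m))$, where $m_1\colon M\to\Z^{\mathscr A_U}$ and $m_2\colon M\to\bigoplus_v\Z^{A_v}$ record the multiplicities of a character along, respectively, the $F$-rational boundary components of $U\setminus T$ and the $F_v$-rational components appearing in each face $A_v$. Applying the snake lemma to the commutative diagram formed by the exact rows $0\to\ker m_1\to M\to m_1(M)\to 0$ and $0\to\bigoplus_v\Z^{A_v}\to\Pic^T(U;A)\to\Z^{\mathscr A_U}\to 0$, with vertical maps $m_2|_{\ker m_1}$, $(m_1,m_2)$, and the inclusion, produces the short exact sequence
\[
0\to\bigoplus_{v\mid\infty}\Z^{A_v}/m_2(\ker m_1)\to\Pic(U;A)\to\Z^{\mathscr A_U}/m_1(M)\to 0.
\]
It then suffices to show that both flanking groups are torsion-free.

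For the right-hand group, I take $\Gamma$-invariants of the exact sequence~\eqref{eqn:toric-seq.U}; by left-exactness this embeds $\Z^{\mathscr A_U}/m_1(M)$ into $\Pic(U_E)^\Gamma$. Since $\Gamma$-invariants preserve saturation, it suffices to verify that the image of $\bar M$ in $\Z^{\bar{\mathscr A}_U}$ is a saturated sublattice, equivalently that $\Pic(U_E)$ is torsion-free. This saturation will follow from the bigness hypothesis on $-(K_X+D)$, which ensures that the rays $\{n_{\bar\alpha}:\bar\alpha\in\bar{\mathscr A}_U\}$ generate the full saturation of their $\Z$-span in $\bar N$.

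For the left-hand group, set $\bar L=\ker(\bar M\to\Z^{\bar{\mathscr A}_U})$, so that $\ker m_1=\bar L^\Gamma$. The key combinatorial input is: at each archimedean $v$, the maximality of $A_v$ as a face of the Clemens complex $\Clan_v(D)$, together with smoothness of $X$, implies that any maximal cone $\sigma$ of the fan containing the rays $\{n_{\bar\alpha}:\bar\alpha\in\bar A_v\}$ satisfies $\sigma\cap\bar{\mathscr A}_D=\bar A_v$; consequently the rays of $\sigma\setminus\bar A_v$ all lie in $\bar{\mathscr A}_U$, and since these rays together with $\bar A_v$ form a unimodular basis of $\bar N$, the rays of $\bar A_v$ surject onto the quotient $\bar N/\bar L^\perp$. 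Dually, the image of $\bar L$ in each $\Z^{\bar A_v}$ is a saturated sublattice; descending to $F_v$-structures, taking $\Gamma$-invariants, and combining contributions across all archimedean places yields saturation of $m_2(\ker m_1)$ in $\bigoplus_v\Z^{A_v}$.

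The hard part will be this third paragraph: extracting the precise combinatorial statement from the maximal-face condition on the Clemens complex, and then tracking saturation through the descent from $\bar E$ to $F_v$ and through the global Galois invariants, particularly when an $F$-irreducible divisor of $D$ splits nontrivially across different archimedean places.
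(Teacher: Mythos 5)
Your snake-lemma decomposition is a legitimate reorganization of what the paper does sequentially: the paper first uses torsion-freeness of $\Pic(U)$ to reduce to $D=0$ (your right-hand factor), then treats the archimedean block (your left-hand factor). So the skeleton is the same; it is the two flanking arguments where I see problems.

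For the right-hand group there is a genuine gap. Splitting \eqref{eqn:toric-seq.U} into $0\to\bar L\to\bar M\to \bar M'\to 0$ and $0\to\bar M'\to\Z^{\bar{\mathscr A}_U}\to\Pic(U_E)\to 0$ (with $\bar M'$ the image of $\bar M$) and taking $\Gamma$-invariants of the second gives an embedding of $\Z^{\mathscr A_U}/(\bar M')^\Gamma$ into $\Pic(U_E)^\Gamma$, \emph{not} of $\Z^{\mathscr A_U}/m_1(M)$. Since $m_1(M)=m_1(\bar M^\Gamma)$ can have finite index in $(\bar M')^\Gamma$, controlled by a subgroup of $\mathrm H^1(\Gamma,\bar L)$, torsion in $\Z^{\mathscr A_U}/m_1(M)$ is not excluded by $\Pic(U_E)$ being torsion-free over $E$. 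The paper's version of this step argues over $F$ directly: $T$-invariant principal divisors on $U$ are exactly the $\div(\chi_m)$ with $m\in M$, so $\ker\bigl(\Pic^T(U)\to\Pic(U)\bigr)=m_1(M)$, and then it invokes torsion-freeness of $\Pic(U)$. Either way, one needs to know why $\mathrm H^1(\Gamma,\bar L)$ does not obstruct; your appeal to ``$\Gamma$-invariants preserve saturation'' addresses saturation of $(\bar M')^\Gamma$, not the passage from $(\bar M')^\Gamma$ to $m_1(M)$. (In fact the cleanest route is to note that the bigness hypothesis forces the rays indexed by $\bar{\mathscr A}_U$ to span $\bar N_\R$, whence $\bar L=0$ and the whole issue evaporates; your sketch invokes bigness only for saturation of their $\Z$-span, which is not the point that is actually needed.)

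For the left-hand group your combinatorial outline is in the right spirit, and you rightly flag it as the hard part. The paper completes it as follows: fix a maximal stratum $Z_v$ of $\Clan_v(X\setminus T)$ containing $A_v$ (it exists and is a single point by the simpliciality of the Clemens complex of a smooth toric variety), observe that the rays of $Z_v\setminus A_v$ index components meeting $U$, so $\chi_m$ has trivial multiplicity along them; on the affine chart $U_{Z_v}\simeq\mathbf A^d$ the map from characters to $T$-invariant divisors is an isomorphism, so $n$-divisibility of the divisor gives $\chi_m=\chi_{m'}^n$ with $m'\in\bar M^{\Gamma_v}\subset\bar M$; finally $m'=m/n$ is $\Gamma$-invariant because $nm'=m\in M$ and $\bar M$ is torsion-free. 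Your plan to ``track saturation through descent to $F_v$ and global Galois invariants'' is exactly this last step, and the last clause (global invariance from torsion-freeness of $\bar M$) is the clean way to do it; without it your sketch does not obviously close, especially across several archimedean places simultaneously.
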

\begin{proof}
Let $D\in\Pic^T(U)$ and, for any place~$v$ such that $v\mid\infty$,
let $D_v\in\Z^{A_v}$. Assume that the class of $(D,(D_v))$ modulo~$M$
is torsion in~$\Pic(U;A)$.  Let $n$ be a positive integer
such that $n(D,(D_v))$ is the image of an element~$m\in M$.
We then have $nD=m$ in $\Pic^T(U)$;
since $\Pic(U)$ is torsion-free 
(\cite{fulton1993}, p.~63),
$D$ is the image of an element of~$M$ by the natural map $M\ra\Pic^T(U)$
This allows us to assume that $D=0$. 

For any archimedean place~$v$,
let $Z_v$ be any maximal stratum of $\Clan_v(X\setminus T)$
which contains~$A_v$. Since $A_v$ is a maximal stratum of
$\Clan_v(X\setminus U)$, the irreducible components of~$(X\setminus T)_v$
which corresponds to elements of~$Z_v\setminus A_v$
have to meet~$U$. As a consequence, the divisor of the character~$\chi_m$ 
is an $n$th power in the group of $T$-linearized divisors
of the affine toric variety corresponding to~$Z_v$.
Using again~\cite{fulton1993}, $\chi_m$ is a $n$th power on
this toric variety. In particular, there exists an $m_v\in M$
such that $\chi_m=\chi_{m_v}^{n_v}$ as characters
of~$T$; in other words, $m'=m/n\in \bar M^{\Gamma_v}$.
In particular, $m'\in \bar M$ and, since $m=nm'\in \bar M^\Gamma$,
we also have $m'\in M$.
We have thus proved that $(D,(D_v))$ is the image of~$m'$
by the natural map $M\ra \Pic^T(U;A)$, as was to be shown.
\end{proof}

Note that for $A=\emptyset$, one has $\Pic^T(U;\emptyset)=\Pic^T(U)$
but $\Pic(U;\emptyset )$ is a sublattice in $\Pic(U)$
of index $\Card{\mathrm H^1(\Gamma,\bar M)}$.
This discrepancy with the natural integral structure on~$\Pic(U)$
will appear later in the definition of the constant~$\Theta$
below (compared with the definitions given in~\cite{batyrev-t95b}).

Let $V_A$ be the real vector space $ V_A= \Pic^T(U;A)_\R $,
endowed with the measure normalized by the lattice $\Pic^T(U;A)$. 
We consider $V_A$ as a \emph{quotient} of~$V$ by letting 
$r_A\colon V\ra V_A$ be the map which forgets the missing components.
Let $\Lambda_A$ be the closed simplicial cone in~$V_A$ consisting
of all vectors~$\tilde{\mathbf s}=(\mathbf s,(\mathbf s_v))$
such that $s_\alpha\geq 0$ for all $\alpha\in\mathscr A_U$,
and $s_{v,\alpha}\geq 0$ for all $v\mid\infty$ and $\alpha\in A_v$.
Pulling-back via~$r_A$ the characteristic function 
of~$\Lambda_A$ we obtain  a rational function~$\mathscr X_{\Lambda_A}$
on~$V_\C$; it is given by
\[ \mathscr X_{\Lambda_A}(\tilde{\mathbf s}) = 
  \left( \prod_{\alpha\in \mathscr A_U} s_\alpha \prod_{v\mid\infty}\prod_{\alpha\in A_v} s_{v,\alpha} \right)^{-1}. \]

Set $V'=V/M$ and let $\pi\colon V\ra V'$  be the natural projection.
By Proposition~\ref{prop.maximal-order} below,
the composition $M_\R\ra V \xrightarrow{r_A}V_A$ is injective.
Let $V'_A=V_A/M_\R$ and $\pi_A\colon V_A\ra V'_A$ be the natural projection;
one has $V'_A=\Pic(U;A)_\R$;
let us endow~$V'_A$ with the Lebesgue measure normalized by~$\Pic(U;A)$. 
There exists a unique map $r'_A\colon V'\ra V'_A$ such that
$\pi_A\circ r_A=r'_A\circ \pi$, so that $V'_A$ is a quotient of~$V'$.
We let $\Lambda'_A=\pi(\Lambda_A)$ be the image of~$\Lambda_A$ in~$V'_A$;
it is an closed cone generated
by the images of the generators of~$\Lambda_A$.
We pull-back to~$V'$ the characteristic function of the cone~$\Lambda'_A$
and obtain a rational function which is given by the integral formula
\[ \mathscr X_{\Lambda'_A} (\pi(\tilde{\mathbf s})) =
 \frac1{(2\pi)^{\rang M}}\int_{M_\R} \mathscr X_{\Lambda_A}(\tilde{\mathbf s}+\mathrm im)\,\mathrm dm.\]
(See, \eg, \cite{chambert-loir-t99b}, Proposition~3.1.9.)

We first conclude that for each $\chi\in\mathscr U_T^K$,
\[  \tilde Z_A(\chi;\tilde{\mathbf s}+\tilde\rho+\mathrm i\tilde m(\chi))=
\frac1{(2\pi)^{\rang M}}\int_{M_\R} \hat H_A(\chi;\tilde{\mathbf s}+\tilde\rho+\mathrm i\tilde m(\chi)+\mathrm im)\,\mathrm dm,
\]
so that the function 
\[ \tilde{\mathbf s}\mapsto \tilde Z_A(\chi;\tilde{\mathbf s}+\tilde\rho+\mathrm i\tilde m(\chi)) \]
is holomorphic on the tube domain
over the interior of the cone~$(r'_A)^{-1}(\Lambda'_A)$
in~$\Tube(V')$.
Moreover, there exists an open neighborhood~$\Omega_A$ of the origin
in~$V'$ such that $\tilde Z_A(\chi;\tilde{\mathbf s}+\tilde\rho+\mathrm i\tilde m(\chi))$ extends to a meromorphic function
on~$\Tube(\Omega_A+(r'_A)^{-1}(\Lambda'_A))$ whose poles are given by the
linear forms on~$V'$ corresponding to the faces of~$\overline\Lambda'_A$.
Moreover, $\tilde Z_A$ decays rapidly in vertical strips and
for any positive $\tilde{\mathbf s}\in V$,
\begin{multline}\label{eq:limit-formula-A}
 \lim_{t\ra0} t^{\dim(\Lambda_A)}\tilde Z_A(\chi; t\tilde{\mathbf s}+\tilde\rho+\mathrm i\tilde m(\chi))\\
 = \mathscr X_{\Lambda'_A}(\tilde{\mathbf s})
\phi(\chi;\rho) \prod_{\alpha\in\mathscr A_U}
\mathrm L^*(1;\chi_\alpha) 
\prod_{v\mid\infty} \phi_{v,A_v}(\chi_{v,1};\mathrm im(\chi_v)).
\end{multline}
 
We now sum these meromorphic functions $\tilde Z_A(\chi;\cdot)$
over all $\chi\in\mathscr U_T^K$.  Due to the stated
decay in vertical strips, this series converges
and defines a meromorphic function with poles
given by the translates of the cones~$\Lambda'_A$ by a
discrete subgroup consisting of (the images of) imaginary
vectors $\mathrm i\tilde m(\chi)$, for $\chi\in\mathscr U_T^K$.

\begin{lemm}\label{lemm.discrete}
Under the map 
$\chi\mapsto \pi(\tilde m(\chi))$,
the group~$\mathscr U_T^K$ is mapped to a discrete subgroup of~$V'_A$.
\end{lemm}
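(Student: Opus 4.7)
My approach is to use Pontryagin duality to identify $\mathscr U_T^K$ as a finitely generated discrete abelian group, analyze the map $\chi\mapsto \tilde m(\chi)$ explicitly, and then deduce discreteness in~$V'_A$ from a rationality argument comparing two natural $\Z$-structures.

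First, fix a finite set $S$ of places of~$F$ containing the archimedean places and the finitely many (by Lemma~\ref{lemma.compact.open}) finite places where $K_v$ is strictly smaller than the maximal compact subgroup of~$T(F_v)$. The description in Section~\ref{sect:adeles}, adapted to $K_{H,\fin}$ in place of~$K_T$, exhibits $T(\AD_F)/T(F)K_{H,\fin}$ as an extension of~$M_\R^\vee$ (via the chosen section) by a compact abelian group, the compact part being---up to a finite group coming from ramification at the finite places of~$S$---the quotient of $\{(n_v)_{v\mid\infty}\in \prod_{v\mid\infty} N_v : \sum_v\langle m,n_v\rangle = 0 \text{ for all } m\in M\}$ by the image of $T(\mathfrak o_{F,S})$. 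By Dirichlet's unit theorem for tori, this image is cocompact, so $\mathscr U_T^K$ is the Pontryagin dual of a compact abelian group; in particular it is a discrete, finitely generated abelian group of rank equal to $\rank(T(\mathfrak o_{F,S}))-\dim M$.

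Next, the map $\chi\mapsto \tilde m(\chi)$ assembles the archimedean infinitesimal data of~$\chi$: at each $v\mid\infty$, the element $m(\chi_v)\in M_v$ embedded into $\Pic^T(X_v)_\R$ via~\eqref{eqn:mv}, together with the imaginary parts $m(\chi_\alpha)\in \R$ of the Hecke characters~$\chi_\alpha$ (for $\alpha\in \mathscr A_U$) obtained via the morphism $\prod_\alpha T_\alpha\to T$ of Section~\ref{sec.toric-var}. A character $\chi\in\mathscr U_T^K$ with $\tilde m(\chi)=0$ is trivial on the connected component of the archimedean torus and on~$K_{H,\fin}$, hence factors through a finite quotient and is of finite order. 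Thus $\chi\mapsto \tilde m(\chi)$ is injective modulo torsion, and by the duality of the previous paragraph its image~$L\subset V$ is a lattice in some $\R$-subspace $W\subset V$, dual to the cocompact lattice $T(\mathfrak o_{F,S})/\text{torsion}$.

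Finally, composing with $r_A\colon V\to V_A$ and $\pi_A\colon V_A\to V_A/M_\R=V'_A$, the image of~$L$ in~$V'_A$ is discrete if and only if $r_A(L)+M_\R$ is closed in~$V_A$, equivalently if and only if $r_A(L)\cap M_\R$ is a lattice of maximal rank in $r_A(W)\cap M_\R$. This is a rationality statement: relative to the natural $\Z$-structure $\Pic^T(U;A)\subset V_A$, the subgroup $r_A(L)$ is $\Q$-rational (being dual to $\Z$-integral data on $T(\mathfrak o_{F,S})$), while $M\subset \Pic^T(U;A)$ is a $\Z$-submodule by~\eqref{eqn:toric-seq.U} and the lemma immediately preceding the current one; their intersection is therefore automatically a lattice of the correct rank. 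The main technical obstacle is this verification of $\Q$-rationality, which requires tracing the normalizations of the local characters~$m(\chi_v)\in M_v$ (as elements of $\Z^{\mathscr A_v}$ via~\eqref{eqn:mv}) and of the Hecke parameters~$m(\chi_\alpha)$ through the duality pairing with $T(\mathfrak o_{F,S})$; once this compatibility is established, standard lattice-theoretic arguments yield the desired discreteness in~$V'_A$.
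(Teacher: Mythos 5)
Your proposal has a genuine gap in the final step, and the gap is not a matter of unfinished bookkeeping: the rationality claim you rely on is actually \emph{false}, so the approach as written cannot be completed.

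You reduce the problem to showing that $r_A(L)$ is $\Q$-rational with respect to the integral structure $\Pic^T(U;A)\subset V_A$, so that $r_A(L)\cap M_\R$ automatically has the right rank. But the lattice $L=\tilde m(\mathscr U_T^K)\subset V$ is Pontryagin-dual to the unit lattice $T(\mathfrak o_{F,S})/\text{torsion}$, and this duality is mediated by the archimedean $\log_v$ maps. Concretely, the position of $L$ inside $\bigoplus_{v\mid\infty}M_v$ involves the logarithms of absolute values of fundamental units, i.e.\ regulator data. Already for $T=\G_m$ over a real quadratic field $F$, the image of $\mathscr U_T^K$ in $\R^2\simeq M_{v_1}\oplus M_{v_2}$ is a rank-one lattice along the anti-diagonal generated by a vector proportional to $2\pi/\log|\eps|$ with $\eps$ a fundamental unit; this is a transcendental multiple of the standard basis vector, hence not $\Q$-rational relative to any of the integral structures $\Z^{\mathscr A_v}$, $\Pic^T(U;A)$, etc. You flag the rationality verification as ``the main technical obstacle,'' but the obstacle is insurmountable: the claim does not hold, and a proof cannot be routed through it.

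The paper's proof bypasses rationality entirely, which is why it works. It uses the simpliciality of the analytic Clemens complex of a smooth toric variety to produce, at each archimedean place, a decomposition $N_v=N_v'\oplus N_v''$ with $N_v'$ spanned by the rays in $A_v$ and $N_v''$ by rays corresponding to divisors in $U\setminus T$. Dually, the surjection $T'\times T''\to T$ (with $T',T''$ the quasi-split tori built from $\mathscr A_D$ and $\mathscr A_U$) exhibits the image of $\mathscr U_T^K$ in $V_A$ as the intersection of the \emph{lattice} $\mathscr U_{T'}^{K'}\times\mathscr U_{T''}^{K''}$ in $M'_\infty\times M''_\infty$ with the subspace $M'_\infty$. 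The intersection of a lattice with any linear subspace is automatically discrete, with no rationality hypothesis. The descent to $V'_A$ is then handled not by a rank computation on $r_A(L)\cap M_\R$ but by observing that the image of $M_\R$ in $V_A$ is \emph{orthogonal} to $\tilde m(\mathscr U_T^K)$, so the quotient map $\pi_A$ restricts to an isomorphism on the relevant subspace and therefore preserves discreteness.

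Your first two paragraphs (Pontryagin duality to see $\mathscr U_T^K$ is discrete and finitely generated, injectivity of $\tilde m$ modulo torsion) are fine as far as they go, but the actual content of the lemma is the survival of discreteness through $r_A$ and $\pi_A$, and that is exactly where the rationality route fails. You would need to replace the last paragraph with an argument in the spirit of the paper's: either exhibit the image in $V_A$ as a lattice intersected with a subspace, or find a geometric complement to $M_\R$ containing the image so that passing to $V'_A$ is safe.
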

\begin{proof}
We first show that $\tilde m(\mathscr U_T^K)$ is discrete in~$V_A$.

Let $v$ be an archimedean place of~$F$.
We proved in~\cite{chambert-loir-tschinkel2010}, Section~5.1,
that each connected component of the analytic Clemens complex of a smooth
toric variety is simplicial. Consequently, there exists a maximal
stratum of~$\Clanmax_v(X\setminus T)$  of the form~$A_v\cup B_v$,
where $B_v$ corresponds to divisors in~$(U\setminus T)_v$. 
Let $N'_v$ and $N''_v$ be the subspaces of~$N_v$ generated
by the rays corresponding to the $F_v$-irreducible components
of~$(X\setminus T)_v$ occurring in~$A_v$, resp.~$B_v$.
One has $N_v=N'_v\oplus N''_v$.
Let us also set $N_\infty=\prod N_v$, and define similarly $N'_\infty$
and $N''_\infty$. 

Let $T'=\prod_{\alpha\in\mathscr A_D} \mathbf G_{m,F_\alpha}$ 
and $T''=\prod_{\alpha\in\mathscr A_U}\mathbf G_{m,F_\alpha}$ 
be the quasi-split tori 
corresponding to $T$-linearized divisors in~$D$, resp.\ outside~$D$.
The natural map $\bar M\ra \Pic^T(X_{E})$ 
of $\Gamma$-modules induces a surjective morphism of algebraic tori
$T'\times T''\ra T$. Let $K'$ and $K''$ be compact subgroups
of~$T'(\AD_F)$ and $T''(\AD_F)$ such that $K'\times K''$ surjects onto~$K$.
Then, the map $(T(\AD_F)/T(F)K)^*\ra M_\infty$
``decomposes'' as a product
\[ (T'(\AD_F)/T'(F)K')^*\times(T''(\AD_F)/T''(F)K'')^*
  \ra  M'_\infty \times M''_\infty. \]
This identifies the image of~$\mathscr U_T^K$ in~$V_A$ as
the intersection with $M'_\infty$ of the
lattice $\mathscr U_{T'}^{K'}\times \mathscr U_{T''}^{K''}$ in
$M'_\infty\times M''_\infty$. It is therefore discrete, as claimed.

This description also shows that the image of~$M_\R$ in~$V_A$
is \emph{orthogonal} to $\tilde m(\mathscr U_T^K)$. As a consequence,
$\pi(\tilde m(\mathscr U_T^K))$ is still discrete in~$V'_A=V_A/M_\R$.
\end{proof}

This furnishes the existence and holomorphy of~$\tilde Z$
in the tube domain over the open subset~$\Omega_{\tilde\rho}$
of $V$ formed of $\tilde{\mathbf s}\in V$ such
that $\tilde{\mathbf s}-\tilde\rho$ has positive coordinates.
(Explicitly, these conditions mean that
$s_\alpha>1$ if $\alpha\in\mathscr A_U$,
and that $s_{v,\alpha}>0$ for all places~$v\mid\infty$ and all~$\alpha$ 
in a face of the analytic Clemens complex $\Clan_v(D)$.

\subsection{Restriction to the log-anticanonical line bundle}
Let us consider the particular case of the height zeta function
with respect to the log-anticanonical line bundle. 
\emph{We assume that this line bundle belongs to the
interior of the effective cone of~$X$.} Then there exists a
$\lambda$ in the interior of the effective cone of $\Pic^T(X)$
such that $\rho\sim\lambda$ in~$\Pic(X)$. Since the height
of a rational point only depends on the isomorphy class
of the underlying line bundle, one has
\[
 Z(s\rho)=Z(\rho+(s-1)\lambda) =\tilde Z(\tilde\rho+(s-1)\tilde\lambda),
\]
where $\tilde\lambda$ is the vector~$(\lambda,(\lambda))\in V$.
Observe that all components of~$\tilde\lambda$ are positive.
It follows that $s\mapsto
Z(s\rho)$ is holomorphic for $\Re(s)>1$ and has a meromorphic
continuation to the left of~$1$.

\subsection{Poles on the boundary of the convergence domain}

We now describe the poles of 
the function $s\mapsto Z(s\rho)$ which satisfy $\Re(s)=1$.

We first claim that they lie in a finite union of arithmetic
progressions. 
Indeed, according to the summation process above,
there is a pole at $1+\mathrm i\tau$ whenever there
exists $\chi\in\mathscr U_T^K$, $A=(A_v)$ a family of faces
of maximal dimension of the analytic Clemens  complexes,
such that $r_A(\tau\tilde\lambda +\tilde m(\chi))$ belongs to a face
of the cone~$\Lambda_A$.
This means that there exists $\alpha\in \bigcup A_v$
such that $\tau=-m_v(\chi_\alpha)/\lambda_\alpha$.
The result now follows from the fact that
for each fixed~$(\alpha,v)$, the image of $\mathscr U_T^K$ 
by the map $\chi\mapsto m_v(\chi_\alpha)$ is an arithmetic progression. 

Fix such a character~$\chi\in\mathscr U_T^K$ and $\tau\in\R$.
According to the limit formula~\eqref{eq:limit-formula-A}
and the vanishing of $\phi_v$
for ramified characters stated in Lemma~\ref{lemm:toric-in-S},
the order of the pole at~$1+\mathrm i\tau$ is at most equal to 
the sum $b(\tau)$ of the following integers:
\begin{itemize} 
\item minus the rank of~$M$;
\item if $\tau=0$, the cardinality of $\mathscr A_U$;
\item for each~$v\mid\infty$, the maximal cardinality
of a face $A_v\in\Clan_v(D)$
such that there exists an unramified character~$\chi_v\in\mathscr U_T$
such that for any $\alpha\in A_v$,  $m(\chi_{v,\alpha})=-\tau$.
\end{itemize}
We set $b=b(0)$; observe that 
\begin{align} b & = -\rang M+\Card{\mathscr A_U} + \sum_{v\mid\infty}(1+\dim\Clan_v(D)) \notag
\\
& = r(\EP(U)) +  \sum_{v\mid\infty}(1+\dim\Clan_v(D)), \label{eqn.b}
\end{align}
since 
\[  \rang M - \Card{\mathscr A_U} =
\rank (\mathrm H^0(U,\gm)/F^*) - \rank (\Pic(U)) = r(\EP(U)), \]
the dimension of the $\Gamma$-invariants of~$\EP(U_{\bar F})$.
We shall prove later on, 
by computing the constant term, that the order of
the pole at $s=0$ is indeed equal to~$b$
 (see Lemma~\ref{lemm:non-vanishing}).

Recall the assumption that the log-anticanonical divisor belongs
to the interior of the effective cone; a fortiori
$T\neq U$, hence $\mathscr A_U\neq\emptyset$.
Therefore,
\begin{equation}
 b > \sum_{v\mid\infty} (\dim\Clan_v(D)+1) - \rang M
 \geq b(\tau). \end{equation}

\subsection{Characters giving rise to the pole of maximal order}

Let $A(T)^*$ be
the group of all automorphic characters $\chi\in (T(\AD_F)/T(F))^*$
such that $\chi_\alpha\equiv 1$ for all $\alpha\in\mathscr A$.
\begin{lemm}\label{lemm.wa}
The group $A(T)^*$ is finite, and canonically identifies
with the Pontryagin dual of the group $T(\AD_F)/\overline{T(F)}^w$,
quotient of~$T(\AD_F)$ by the closure of~$T(F)$ for the product topology.
\end{lemm}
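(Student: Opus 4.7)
The plan is to identify $A(T)^*$ with the continuous characters of $T(\AD_F)$ trivial on $H := T(F) \cdot f(T'(\AD_F))$, where $T' := \prod_{\alpha \in \mathscr A} T_\alpha$ is the quasi-split torus surjecting onto~$T$ via the morphism of tori dual to the inclusion $\bar M \hookrightarrow \mathbf Z^{\bar{\mathscr A}} \simeq \Pic^T(X_E)$ from Section~\ref{sec.toric-var}, and $f \colon T'(\AD_F) \to T(\AD_F)$ is the induced map on adele points. Unwinding the definition of~$\chi_\alpha$, the condition $\chi_\alpha \equiv 1$ for every $\alpha \in \mathscr A$ is exactly $\chi \circ f \equiv 1$ on $T'(\AD_F)$; combined with automorphy $\chi|_{T(F)}=1$ this gives $\chi|_H = 1$, so $A(T)^* = (T(\AD_F)/H)^*$.

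The heart of the argument is the identification $H = \overline{T(F)}^w$. One inclusion is straightforward: since $T'$ is quasi-split, weak approximation holds for~$T'$, so $T'(F)$ is dense in $T'(\AD_F)$ for the product topology. Because $f(T'(F)) \subset T(F)$ and $f$ is continuous, we obtain $f(T'(\AD_F)) \subset \overline{T(F)}^w$, hence $H \subset \overline{T(F)}^w$. For the reverse inclusion, let $S$ be the kernel of $T' \to T$; its character module identifies with $\Pic(X_E)$. The long exact sequence in Galois cohomology derived from $1 \to S \to T' \to T \to 1$, together with $\mathrm H^1(F,T') = 0$ and $\mathrm H^1(F_v,T') = 0$ (Hilbert 90 and Shapiro), produces isomorphisms
\[ T(F)/f(T'(F)) \simeq \mathrm H^1(F,S), \qquad T(\AD_F)/f(T'(\AD_F)) \simeq \bigoplus_v \mathrm H^1(F_v,S), \]
the restricted product on the right becoming a direct sum because $\mathrm H^1_{\mathrm{nr}}(F_v,S) = 0$ at places of good reduction (Lang's theorem). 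By the classical analysis of weak approximation on tori due to Voskresenski\u\i\ and Colliot-Th\'el\`ene--Sansuc, $T(\AD_F)/\overline{T(F)}^w$ is canonically isomorphic to the cokernel of the restriction map $\mathrm H^1(F,S) \to \bigoplus_v \mathrm H^1(F_v,S)$; a diagram chase then shows this cokernel coincides with $T(\AD_F)/H$, yielding $\overline{T(F)}^w = H$.

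Combining these identifications gives $A(T)^* = (T(\AD_F)/\overline{T(F)}^w)^*$, and finiteness of $A(T)^*$ follows from the classical finiteness of the weak-approximation obstruction for tori over number fields, itself a consequence of finiteness of the cokernel above. The main obstacle is precisely the reverse inclusion $\overline{T(F)}^w \subset H$: characters in $A(T)^*$ are continuous only for the adelic topology, which is strictly finer than the product topology, so one cannot argue by a direct limiting process, and the cohomological identification of both quotients with the same finite Tate--Shafarevich-type cokernel is essential.
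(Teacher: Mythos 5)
Your proof follows essentially the same route as the paper's: you introduce the quasi-split torus $T' = \prod_{\alpha\in\mathscr A} T_\alpha$ (the paper calls it $P$), observe that $A(T)^*$ is by definition the kernel of pullback along $\mu\colon P(\AD_F)/P(F) \to T(\AD_F)/T(F)$, hence dual to the cokernel of~$\mu$, and identify that cokernel with $T(\AD_F)/\overline{T(F)}^w$ via the weak-approximation theory of Voskresenski\u\i\ (and Colliot-Th\'el\`ene--Sansuc). You have merely unpacked the Galois-cohomology steps that the paper compresses into ``inspection of the proof of Theorem~6 of \cite{voskresenskii1970}''.
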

Note that the product topology on $T(\AD_F)$ is coarser than the adelic topology,
so that $\overline{T(F)}^w$ is indeed a closed subgroup of $T(\AD_F)$.
\begin{proof}
Let $P$ be the quasi-split torus dual to the permutation Galois module $\Pic^T(\bar X)$;
the map $\bar M\ra \Pic^T(\bar X)$ induces a morphism $\mu\colon P\ra T$ of algebraic tori.
By definition, $A(T)^*$ is the kernel of the morphism
\[ \mu^*\colon (T(\AD_F)/T(F))^*  \ra (P(\AD_F)/P(F))^*. \]
By Pontryagin duality, $A(T)^*$ is the dual of the cokernel of the map
\[ P(\AD_F)/P(F) \ra T(\AD_F)/T(F) \]
induced by~$\mu$.
Inspection of the proof of Theorem~6 of~\cite{voskresenskii1970} then shows
this cokernel is equal to the finite group $T(\AD_F)/\overline{T(F)}^w$, hence the lemma.
\end{proof}

The quotient $T(\AD_F)/\overline{T(F)}^w$ is classically denoted $A(T)$,
it  measures the obstruction to weak approximation. 
\begin{lemm}\label{lemm.brauer}
The closure $\overline{T(F)}$ of~$T(F)$ in~$X(\AD_F)$ coincides
with $X(\AD_F)^{\Br(X)}$, 
the locus in~$X(\AD_F)$ where the Brauer--Manin obstruction vanishes. 
In particular, it is an non-empty open and closed subset of~$X(\AD_F)$.
\end{lemm}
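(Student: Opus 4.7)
The plan is to prove the two set-theoretic inclusions separately, while establishing the topological claims along the way.

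First, I would verify that $X(\AD_F)^{\Br(X)}$ is both open and closed in~$X(\AD_F)$ and already contains~$\overline{T(F)}$. Since $X$ is a smooth projective geometrically rational variety over a number field, the quotient $\Br(X)/\Br(F)$ is finite (a classical result of Colliot-Th\'el\`ene). Each class $b\in\Br(X)$ defines a continuous locally constant map $X(\AD_F)\to\Q/\Z$ via the sum of local invariants; by global reciprocity, this map vanishes on~$X(F)$. Hence $X(\AD_F)^{\Br(X)}$ is cut out by finitely many such conditions, so it is open and closed, contains $X(F)\supseteq T(F)$, and therefore contains $\overline{T(F)}$. Non-emptiness is automatic since $1\in T(F)$ already lies in the set.

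For the reverse inclusion, I would invoke the fundamental theorem on rational points of smooth projective toric varieties (Colliot-Th\'el\`ene--Sansuc, obtained via Salberger's universal-torsor descent): such varieties satisfy weak approximation modulo the Brauer--Manin obstruction, giving $\overline{X(F)}=X(\AD_F)^{\Br(X)}$ in~$X(\AD_F)$. It then suffices to show $\overline{X(F)}\subseteq\overline{T(F)}$, i.e., that every $F$-rational point of~$X$ is a product-topology limit of $F$-rational points of~$T$.

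To establish this, I fix $x\in X(F)$ and a basic open neighborhood $W=\prod_{v\in S}W_v\times\prod_{v\notin S}X(F_v)$ of~$x$ in~$X(\AD_F)$, with $S\subset\Val(F)$ finite, and seek $t\in T(F)\cap W$. Since $T$ is Zariski open and dense in the smooth variety~$X$, the subset $T(F_v)$ is $v$-adically dense in~$X(F_v)$ at every place; pick $t_v\in W_v\cap T(F_v)$ for each $v\in S$. By Lemma~\ref{lemm.wa}, the only obstruction to weak approximation for~$T$ is the finite group $A(T)=T(\AD_F)/\overline{T(F)}^w$. I would then extend the tuple $(t_v)_{v\in S}$ to an adelic point of~$T(\AD_F)$ by choosing components at places outside~$S$ so as to kill the resulting class in~$A(T)$; this is possible because, $A(T)$ being finite, the restriction map $\prod_{v\notin S}T(F_v)\to A(T)$ is surjective, as a Chebotarev-type argument applied to the finite quotient shows. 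Once the extended tuple lies in~$\overline{T(F)}^w$, weak approximation produces $t\in T(F)$ arbitrarily close to the $t_v$ at each $v\in S$; shrinking the $W_v$ initially ensures $t\in W\cap T(F)$.

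The main obstacle is the last step: the effective adjustment of the adelic tuple at cofinitely many places to kill the $A(T)$-class. All other ingredients are either formal manipulations or quotations of well-known results (finiteness of~$\Br(X)/\Br(F)$, weak approximation modulo Brauer--Manin for smooth projective toric varieties, weak approximation for tori up to the finite group~$A(T)$).
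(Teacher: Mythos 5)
Your first paragraph (openness, closedness, and $\overline{T(F)}\subseteq X(\AD_F)^{\Br(X)}$) is fine and matches what the paper uses. But the rest takes a genuinely different and, as written, gappy route to the reverse inclusion, whereas the paper's argument is short and quotes a single result: by~\cite{sansuc1981} (Théorème~8.12, Corollaire~9.4), $\overline{T(F)}^w = T(\AD_F)^{\Br(X)}$ already on the torus; combining this with the density of~$T(\AD_F)$ in~$X(\AD_F)$ and with the openness of~$X(\AD_F)^{\Br(X)}$ immediately gives $\overline{T(F)}=X(\AD_F)^{\Br(X)}$, with no need to pass through~$X(F)$ or to analyze the obstruction group~$A(T)$ place by place.

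The genuine gap is in your last step. You reduce the problem to showing that the composite $\prod_{v\notin S}T(F_v)\hookrightarrow T(\AD_F)\twoheadrightarrow A(T)$ is surjective, and you assert this ``as a Chebotarev-type argument applied to the finite quotient shows.'' This is not a proof. The claim says precisely that no nontrivial character in $A(T)^*$ can be supported only at the places in~$S$, equivalently that the class field/Brauer data cutting out $\overline{T(F)}^w$ in $T(\AD_F)$ cannot be concentrated in a prescribed finite set of places. That is a real statement of Poitou--Tate type about $H^1(\Gamma,\Pic X_E)$ (or about $\Sha_\omega$ of the relevant module), not something one gets by just invoking Chebotarev; you would have to unwind Sansuc's duality $A(T)^\ast\simeq\Br(X)/\Br(F)$, show that a class $b$ whose local evaluation is identically zero on $T(F_v)$ for all $v\notin S$ must be constant, and then argue that such a class is trivial modulo~$\Br(F)$. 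Since you end up needing Sansuc's theory anyway to control~$A(T)$, it is strictly simpler to cite Sansuc's result $\overline{T(F)}^w=T(\AD_F)^{\Br(X)}$ directly, as the paper does, rather than invoking the stronger package ``weak approximation modulo Brauer--Manin for the projective compactification $X$'' (Colliot-Th\'el\`ene--Sansuc via universal torsors) and then re-descending to~$T$; that theorem for~$X$ is itself downstream of the result for the torus, so your route is both longer and logically circuitous.

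A small additional remark: even if you insist on going via $X(F)$, the adjustment at places outside~$S$ is avoidable. Since $x\in X(F)$ pairs trivially with $\Br(X)$ and the Brauer--Manin pairing is locally constant, any $(t_v)\in T(\AD_F)$ sufficiently close to~$x$ at the places of~$S$ already lies in $T(\AD_F)^{\Br(X)}$; Sansuc's theorem then puts it in $\overline{T(F)}^w$, and you are done. But that is exactly the paper's argument, minus the unnecessary detour through $X(F)$.
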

\begin{proof}
Let us observe that $T(\AD_F)$ is dense in~$X(\AD_F)$, so that
$\overline{T(F)}^w=T(\AD_F)\cap \overline{T(F)}$.
According to~\cite{sansuc1981}, Theorem~8.12 and Corollary~9.4,
$\overline{T(F)}^w$ coincides with the locus $T(\AD_F)^{\Br(X)}$ in~$T(\AD_F)$ 
where the Brauer--Manin obstruction vanishes.
Observe that, $T(\AD_F)^{\Br(X)}=T(\AD_F)\cap X(\AD_F)^{\Br(X)}$.
Since the Brauer--Manin pairing is continuous and $\Br(X)/\Br(F)$ is finite,
$X(\AD_F)^{\Br(X)}$ is open and closed in~$X(\AD_F)$. An easy topological
argument then shows that $\overline{T(F)}=X(\AD_F)^{\Br(X)}$.
\end{proof}

A character $\chi\in\mathscr U_T^K$ contributes
to a pole of order~$b$ at $s=0$ if and only if the
following properties hold:
\begin{itemize}
\item for any $\alpha\in\mathscr A_U$, the Hecke character $\chi_\alpha$ is trivial;
\item for any $v\mid\infty$, there exists a face~$A_v$ of maximal dimension
of $\Clan_v(D)$, such that for any $\alpha\in A_v$, 
the local character $\chi_{v,\alpha}$ is trivial.
\end{itemize}

%

\begin{prop}\label{prop.maximal-order}
Let $A=(A_v)_{v|\infty }$ be a family, where for each~$v$, $A_v$
is a maximal stratum of $\Clan_v(D)$.
Let $\chi\in T(\AD_F)^*$ be any topological character
satisfying the following assumptions:
\begin{itemize}
\item For all $\alpha\in\mathscr A_U$, the adelic character
$\chi_\alpha$ is trivial;
\item For any archimedean place~$v$, the restriction of the analytic
character~$\chi_v$ to the stabilizer of the stratum~$D_{A_v}$ 
in~$T(F_v)$ is trivial.
\end{itemize}
Then all archimedean components of~$\chi$ are trivial.
\end{prop}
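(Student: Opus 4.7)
Fix an archimedean place $v$; I show $\chi_v = 1$.
Denote by $H_v \subset T(F_v)$ the stabilizer of the stratum $D_{A_v}$, and by $G_v \subset T(F_v)$ the image of the map $\prod_{\alpha\in\mathscr A_U}T_\alpha(F_v)\ra T(F_v)$.
The two hypotheses read $\chi_v|_{H_v}=1$ and $\chi_v|_{G_v}=1$ (the latter since each Hecke character $\chi_\alpha$ is by construction the pullback of $\chi$ along $T_\alpha\ra T$), so it suffices to prove the group-theoretic identity $H_v \cdot G_v = T(F_v)$.

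Via the exact sequence~\eqref{eqn:toric-seq} and the orbit--stabilizer description on a smooth toric variety, the Lie algebras of $H_v$ and $G_v$ inside $N_v\otimes\R$ are respectively the $\R$-spans of the primitive cocharacters $\{n_\alpha : \alpha\in A_v\}$ and of $\{n_\alpha : \alpha\in\bmA_U\}$ (the latter passed through the morphism $\prod_\alpha T_\alpha\ra T$ dual to $\bar M\hookrightarrow \Z^{\bmA}$).
The geometric heart of the proof is the claim that these two families jointly span $N_v\otimes\R$.
Consider the closure $V\subset X_{F_v}$ of the open torus orbit attached to $A_v$: by smoothness of $X$ and $\Gamma_v$-stability of the cone $\sigma_{A_v}$, $V$ is a smooth projective toric $F_v$-variety for the quotient torus $T_v/H_v^\circ$, with complete $F_v$-fan in $(N_v\otimes\R)/\langle A_v\rangle$; its rays therefore span the quotient.
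A ray corresponds to some $\alpha \in \mathscr A_v \setminus A_v$ for which $A_v\cup\{\alpha\}$ is a cone of the $F_v$-fan of $X$, and I claim each such $\alpha$ is a $\Gamma_v$-orbit in $\bmA_U$: were it rather contained in $\bmA_D$, the stratum $D_{A_v\cup\{\alpha\}}^\circ$ would contain the distinguished $F_v$-rational point attached to the $\Gamma_v$-stable cone $\sigma_{A_v\cup\{\alpha\}}$, making $A_v\cup\{\alpha\}$ a face of $\Clan_v(D)$ strictly containing $A_v$ and contradicting maximality.

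Combining these observations yields $\Lie(H_v)+\Lie(G_v)=\Lie(T(F_v))$, so $H_v \cdot G_v$ contains the identity component $T(F_v)^\circ$; at a complex place this completes the argument.
At a real place one must further check that $H_v\cdot G_v$ surjects onto the finite component group $\pi_0 T(F_v)$, which follows from the same spanning claim read modulo~$2$, using the standard description of $\pi_0 T(F_v)$ via the $\Gamma_v$-action on $\bar N/2\bar N$.
The main obstacle is the geometric spanning claim above: the essential input is that every $\Gamma_v$-stable cone of the fan carries a distinguished $F_v$-rational point, so that maximality in the \emph{analytic} Clemens complex (as opposed to merely the scheme-theoretic one) genuinely excludes every neighboring cone whose additional ray lies in $D$.
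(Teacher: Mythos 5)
Your geometric heart coincides with the paper's: you both reduce to the claim that, once you extend $\sigma_{A_v}$ to a maximal cone $\sigma_{B_v}$ of the $F_v$-fan, maximality of $A_v$ in the \emph{analytic} Clemens complex $\Clan_v(D)$ forces $B_v\setminus A_v\subseteq\bmA_U$, using that a $\Gamma_v$-stable cone carries a distinguished $F_v$-rational point. (The paper imports this from Section~5.1 of \cite{chambert-loir-tschinkel2010}, whereas you rederive it via the star fan; both are fine.) Where the two proofs genuinely diverge is the final step. The paper passes to the maximal split torus $T'_v\subset T_{F_v}$ whose Zariski closure contains the distinguished point $b_v$: the cocharacters $n_\beta$, $\beta\in B_v$, generate $X_*(T'_v)$, so $\chi_v$ is killed on $T'_v(F_v)$, and one then finishes at the level of groups using the split structure. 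You instead run a Lie-algebra computation to obtain $H_v\cdot G_v\supseteq T(F_v)^\circ$, and then patch up $\pi_0$ by hand at real places.

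Two remarks on your final step. First, a minor imprecision: the Lie algebras of the real Lie groups $H_v$, $G_v$, $T(F_v)$ do not live in $N_v\otimes\R=\bar N^{\Gamma_v}\otimes\R$ but in $(\bar N\otimes\C)^{\Gamma_v}$; the spanning assertion is still correct, but the ambient space should be stated carefully. Second, and more substantively, the $\pi_0$ step is where your argument is thinnest: surjectivity of $H_v\cdot G_v$ onto $\pi_0 T(F_v)$ is a group-level fact that does not formally follow from ``the spanning claim read mod $2$'' (and the identification of $\pi_0 T(\R)$ with a $\Gamma_v$-invariant quotient of $\bar N/2\bar N$ needs the precise statement, e.g., $\hat{\mathrm H}^0(\Gamma_v,\bar N)$, together with the fact that $\bar N$ is a permutation module here). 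You actually hold the cleaner finish in your hands: since $\{n_{\beta'}\}_{\beta'\in\bigcup B_v}$ is a $\Gamma_v$-permuted $\Z$-basis of $\bar N$ (smoothness of $X$) and $B_v=A_v\sqcup C_v$ with $C_v\subseteq\bmA_U$, you get a $\Gamma_v$-equivariant direct sum $\bar N=N'\oplus N''$ and hence a product decomposition $T_{F_v}\cong T'\times T''$ over $F_v$, with $T'(F_v)=H_v$ and $T''(F_v)\subseteq G_v$. Triviality of $\chi_v$ on both factors of $T(F_v)=T'(F_v)\times T''(F_v)$ then follows from the two hypotheses with no separate treatment of the identity component or of $\pi_0$. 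I would recommend replacing the Lie-algebra-plus-$\pi_0$ endgame by this product decomposition; it is what actually powers the split-torus compactness argument in the paper, and it eliminates the one step you left unjustified.
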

\begin{proof}
We need to prove that for any archimedean place~$v$, the analytic
character~$\chi_v$ is trivial. Let us fix such a place.
The description of the analytic Clemens of a smooth toric variety
done in~\cite{chambert-loir-tschinkel2010}
 implies that for each~$v$, there exists
a maximal stratum~$B_v$ of $\Clan_v(X\setminus T)$ containing~$A_v$,
and this stratum is reduced to a point~$b_v\in X(F_v)$.
Moreover, there exists a maximal split torus $T'_v$ in~$T_{F_v}$
such that the Zariski closure of~$T'_v$ in~$X_{F_v}$ contains~$b_v$.
The assumptions imply that $\chi_{v,\beta}$ is trivial
for any $\beta\in B_v$. Since the corresponding cocharacters generate
the group of cocharacters of~$T'_v$, 
the analytic character~$\chi_v$ is trivial on~$T'_v$.
Moreover, the torus $T_{F_v}/T'_v$ is anisotropic by construction,
so that $T(F_v)/T'_v(F_v)$ is compact; moreover,
$T(F_v)$ is contained in the product of~$T'(F_v)$ and
of the stabilizer in~$T(F_v)$ of the stratum~$D_{A_v}$.
It follows that $\chi_v$ is trivial, as claimed.
\end{proof}

\begin{defi}
Let $A(T,U)^*$ be the subgroup of $(T(\AD_F)/T(F))^*$ 
consisting of characters~$\chi$ such that:
\begin{itemize}
\item For all $\alpha\in\mathscr A_U$, the Hecke
character~$\chi_\alpha$ is trivial;
\item For any $v\mid\infty$, the local character~$\chi_v$ is trivial.
\end{itemize}
We also write $A(T,U,K)^*$ for the subgroup of~$A(T,U)^*$
consisting of characters  which are trivial on~$K$.
\end{defi}
According to Proposition~\ref{prop.maximal-order}, the characters which contribute
to the main pole are elements of~$A(T,U,K)^*$.
We also see that the natural map $M_\R\ra V_A$ is injective.

\begin{lemm}
The group $A(T,U,K)^*$ is finite. 
\end{lemm}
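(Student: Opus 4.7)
The plan is to observe that $A(T,U,K)^*$ is contained in the larger group $A(T,K)^*$ of all characters of~$T(\AD_F)$ which are trivial on~$T(F)$, on the archimedean factor $T(F_\infty)=\prod_{v\mid\infty}T(F_v)$, and on the compact open subgroup $K=K_{H,\fin}$. By Pontryagin duality, $A(T,K)^*$ is the character group of
\[
 G = T(\AD_F)\bigm/T(F)\cdot T(F_\infty)\cdot K,
\]
so it will suffice to prove that~$G$ is finite, for which I will show that it is simultaneously discrete and compact. Discreteness is immediate: $T(F_\infty)\cdot K$ is the product of the whole archimedean factor with a compact open subgroup of~$T(\AD_{F,\fin})$, hence is open in~$T(\AD_F)$ and contained in the denominator.

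The main step is compactness. I would use the fact recalled in Section~\ref{sect:adeles} that $T(\AD_F)^1/T(F)$ is compact, together with the claim that $T(F_\infty)$ surjects onto $T(\AD_F)/T(\AD_F)^1\simeq N_\R$; granting this, $T(\AD_F)=T(F_\infty)\cdot T(\AD_F)^1$, so $G$ is a quotient of $T(\AD_F)^1/T(F)$, and hence compact. For the surjectivity, it is enough to work at a single archimedean place~$v$: the map $\log_v\colon T(F_v)\to N_v$ is surjective by construction, and the natural map $N_v=\bar N^{\Gamma_v}\otimes\R\to N_\R=\bar N_\Gamma\otimes\R$ is surjective, because $\bar N^\Gamma\subseteq\bar N^{\Gamma_v}$ while the quotient map $\bar N^\Gamma\to\bar N_\Gamma$ has finite cokernel.

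Combining discreteness and compactness, $G$ is finite; so is its Pontryagin dual $A(T,K)^*$, and hence so is its subgroup $A(T,U,K)^*$. I expect the compactness assertion, and in particular the surjectivity of $T(F_v)\to N_\R$, to be the only step requiring any real care---the rest is topological bookkeeping. Note that the triviality conditions on~$\chi_\alpha$ for $\alpha\in\mathscr A_U$ enter only through the inclusion $A(T,U,K)^*\subseteq A(T,K)^*$ and play no role in the finiteness argument itself.
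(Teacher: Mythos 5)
Your proof is correct, but it takes a genuinely different and more self-contained route than the paper's. The paper proves the lemma in two steps: it first restricts characters to the maximal compact subgroup $K_T$ of $T(\AD_F)$ and observes that the image in $(K_T/K)^*$ is finite (using the triviality of archimedean components), and then notes that the kernel lands in $(T(\AD_F)/T(F)T(F_\infty)K_T)^*$, which it declares finite by citing Ono's Theorem~3.1.1. You instead prove directly that the quotient $G=T(\AD_F)/T(F)T(F_\infty)K$ is finite by exhibiting it as simultaneously discrete (openness of $T(F_\infty)K$) and compact (reduction to $T(\AD_F)^1/T(F)$ via the surjectivity of $T(F_\infty)\to N_\R$). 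In effect you unfold the content of Ono's class-number theorem rather than quoting it, and you bypass the intermediate comparison with $K_T$ entirely. Both arguments ultimately rest on the same two facts recalled in Section~2.3: the openness of the relevant subgroup and the compactness of $T(\AD_F)^1/T(F)$. Your surjectivity argument for $N_v\to N_\R$ is correct (and is just the dual of the injection $M_\R\hookrightarrow M_v$, or as you phrase it, the fact that $\bar N^\Gamma\otimes\R\to\bar N_\Gamma\otimes\R$ is an isomorphism). The one stylistic point: calling $\bar N^\Gamma\to\bar N_\Gamma$ a ``quotient map'' is a slight abuse, since it is the composition of an inclusion with a quotient; the intended meaning is clear and the conclusion (finite cokernel, hence an isomorphism after $\otimes\R$) is right.
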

\begin{proof}
Consider the natural morphism from~$A(T,U,K)^*$ to $(K_T/K)^*$.
Its image is finite. Since the archimedean components of the
characters in~$A(T,U,K)^*$ are trivial, the kernel of this morphism
is a subgroup of $(T(\AD_{F})/T(F)T(F_\infty)K_{T})^*$,
hence is finite by Theorem~3.1.1 of~\cite{ono1961}.
\end{proof}

Let $M_U$ be the kernel of the natural map $M\ra \Z^{\mathscr A_U}$
and let $T_U$ be the corresponding torus. There is a surjective
morphism of tori~$\pi\colon T\ra T_U$ whose kernel is a torus~$T'$. 

\begin{lemm}\label{lemm.ortho.A(T,U)}
The orthogonal of $A(T,U)^*$ in~$T(\AD_F)$ is a subgroup of finite
index in~$\pi^{-1}(T_U(F)) T(F_\infty)$.
\end{lemm}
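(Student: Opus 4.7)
The plan is to apply Pontryagin duality. A character $\chi\in T(\AD_F)^*$ lies in $A(T,U)^*$ precisely when it is trivial on each of three subgroups of $T(\AD_F)$: the rational points $T(F)$, the archimedean component $T(F_\infty)=\prod_{v\mid\infty}T(F_v)$, and the image $H=\phi\bigl(\prod_{\alpha\in\mathscr A_U} T_\alpha(\AD_F)\bigr)$ of the natural morphism $\phi\colon \prod_{\alpha\in\mathscr A_U} T_\alpha\to T$ from Section~\ref{sec.toric-var}. Consequently $A(T,U)^{*,\perp}$ is the closure in $T(\AD_F)$ of the subgroup $T(F)\cdot T(F_\infty)\cdot H$.

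Next, I identify the image of $\phi$ in $T$. On character lattices $\phi$ corresponds to the map $M\to\Z^{\mathscr A_U}$, whose kernel is $M_U$ by definition; dually, the image of $\phi$ is the subtorus of $T$ with character lattice $M/M_U$, which is precisely $T'=\ker\pi$. A standard adelic argument---using that the kernel of $\prod_\alpha T_\alpha\to T'$ is a group of multiplicative type with finite Galois cohomology at every completion---then shows that $H$ has finite index in $T'(\AD_F)$.

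One then verifies that $\pi(T(F))$ has finite index in $T_U(F)$. Fix a finite Galois extension $E/F$ splitting $T'$; by Hilbert~90 applied over $E$, the map $T(E)\to T_U(E)$ is surjective. Given $s\in T_U(F)\subset T_U(E)$, lift to some $t'\in T(E)$ with $\pi(t')=s$; then $t=\Nm_{E/F}(t')\in T(F)$ satisfies $\pi(t)=\Nm_{E/F}(s)=s^{[E:F]}$. Hence $\pi(T(F))\supset T_U(F)^{[E:F]}$, which is of finite index in $T_U(F)$ because the latter is finitely generated (generalized Dirichlet unit theorem for tori). Combined with $T'(\AD_F)\subset\ker\pi$, this yields that $T(F)\cdot T'(\AD_F)$ has finite index in $\pi^{-1}(T_U(F))$, whence $T(F)\cdot T(F_\infty)\cdot H$ has finite index in $\pi^{-1}(T_U(F))\cdot T(F_\infty)$.

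Since passing to closures in the locally compact abelian group $T(\AD_F)$ preserves finite index, the closure of $T(F)\cdot T(F_\infty)\cdot H$---that is, $A(T,U)^{*,\perp}$---has finite index in the closure of $\pi^{-1}(T_U(F))\cdot T(F_\infty)$. The main obstacle is the interaction with closures: one must verify that $\pi^{-1}(T_U(F))\cdot T(F_\infty)$ is itself closed in $T(\AD_F)$, which after quotienting by $T(F_\infty)$ reduces to checking that the image of $\pi^{-1}(T_U(F))$ in $T(\AD_{F,\fin})$ is closed; this in turn depends on closedness of $T_U(F)\cdot\pi(T(F_\infty))$ inside $T_U(\AD_F)$ together with the continuity of~$\pi$. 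Alternatively, one may interpret the statement as a commensurability assertion, in which case the common subgroup $T(F)\cdot T(F_\infty)\cdot H$ of finite index in each side makes the conclusion immediate.
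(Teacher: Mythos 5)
Your reduction to the three subgroups $T(F)$, $T(F_\infty)$ and $H=\phi\bigl(\prod_{\alpha\in\mathscr A_U}T_\alpha(\AD_F)\bigr)$ is correct and matches the paper's starting point, but the key finite-index step contains a genuine error. You assert that $T_U(F)^{[E:F]}$ has finite index in $T_U(F)$ ``because the latter is finitely generated (generalized Dirichlet unit theorem for tori).'' This is false: the Dirichlet unit theorem applies to $S$-integral points $T_U(\mathfrak o_{F,S})$, not to $T_U(F)$. Already for $T_U=\gm$ one has $T_U(F)=F^\times$, which is not finitely generated, and $F^\times/(F^\times)^n$ is infinite for $n\geq 2$. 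In fact the intermediate claim itself is wrong: $\pi(T(F))$ need not have finite index in $T_U(F)$. Take $T=\Res_{E/F}\gm$, $T_U=\gm$, $\pi=\mathrm N_{E/F}$; then $\pi(T(F))=\mathrm N_{E/F}(E^\times)$ has infinite index in $F^\times$.

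What saves the lemma is that one only needs $T(F)T'(\AD_F)$ to have finite index in $\pi^{-1}(T_U(F))$, not $\pi(T(F))$ to have finite index in all of $T_U(F)$. Passing to $\pi$-images, the quotient $\pi^{-1}(T_U(F))/T(F)T'(\AD_F)$ identifies with $\bigl(T_U(F)\cap\pi(T(\AD_F))\bigr)/\pi(T(F))$, which (by the exact sequence $T(F)\to T_U(F)\to\H^1(F,T')$ the paper invokes, and its local analogues) embeds into the Tate--Shafarevich group $\Sha^1(F,T')=\ker\bigl(\H^1(F,T')\to\prod_v\H^1(F_v,T')\bigr)$. Finiteness of this group for a torus over a number field is the tool you need, not finite generation of $T_U(F)$. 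A secondary point: your identification of $\mathrm{Im}(\phi)$ with $T'$ glosses over the fact that the character lattice of $\mathrm{Im}(\phi)$ is $\bar M/\bar M_U$ where $\bar M_U=\ker(\bar M\to\Z^{\bmA_U})$, whereas $X^*(T')=\bar M/M_U$ with $M_U=\bar M_U^\Gamma$; these agree only when $\Gamma$ acts trivially on $\bar M_U$, so $\mathrm{Im}(\phi)$ may be a proper subtorus of $T'$ and your step (a) would then fail as well. The paper sidesteps this by arguing directly (via Lie algebras and the archimedean triviality built into $A(T,U)^*$) that $T'(\AD_F)$ lies in the orthogonal, rather than trying to produce it as the image of $\phi$.
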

\begin{proof}
Let $T(\AD_F)^\perp$ be the orthogonal of $A(T,U)^*$.
By definition of~$A(T,U)^*$, it contains $T(F_\infty)T(F)$.

We have $T'(\AD_F)\subset T(\AD_F)^\perp$. Indeed, let $\chi\in A(T,U)^*$.
For any place~$v$ of~$F$, the local component~$\chi_v$
of~$\chi$ is trivial on~$T'(F_v)$ because the Lie algebra of~$T'$ is
orthogonal to~$M_U$, by definition of~$T'$. This implies that $\chi$
is trivial on~$T'(\AD_F)$.

Furthermore, the exact sequence of cohomology 
\[ T'(F)\ra T(F) \ra T_U(F) \ra H^1(F, T'(\bar F)) \]
implies that $\pi(T(F))$ is a subgroup of finite  index in~$T_U(F)$.
The lemma follows.
\end{proof}

Let $T(\AD_F)^{\perp}$ be the orthogonal of $A(T,U)^*$ in~$T(\AD_F)$.
Since $A(T,U,K)^*$ is the intersection of~$ A(T,U)^* $ with the orthogonal
of~$K$ in~$T(\AD_F)^*$, we have
\[ T(\AD_F)^{A(T,U,K)^*} = T(\AD_F)^{\perp} K. \]
The left hand side is an open subgroup  of finite index~$\Card{A(T,U,K)^*}$
in~$T(\AD_F)$; we endow it with the Haar measure $\Card{A(T,U,K)^*} \mu_T$.
Then, there exists a unique Haar measure~$\mu_T^\perp$ on~$T(\AD_F)^\perp$ such that
for any $K$-invariant fonction~$f$ on~$T(\AD_F)$,
\begin{equation}
\label{eq.muTperp}
  \int_{T(\AD_F)^\perp} f(x)\,\mathrm d\mu_T^\perp(x)
 = \Card{A(T,U,K)^*} \int_{T(\AD_F)^{A(T,U,K)^*}}  f(x)\,\mathrm d\mu_T(x) .\end{equation}
This measure does not depend on the choice of the compact  subgroup~$K$.

%
%

\subsection{The leading term}

By the preceding analysis, with $b$ defined as in Equation~\eqref{eqn.b},
one has
\[
\lim_{t\ra 1} (t-1)^{b} Z(t\rho) 
= 
\sum_{A\in\Clanmax_\infty(D)}   \Theta_A \]
where, for each $A\in\Clanmax_\infty(D)$,
\begin{align*} \Theta_A &= 
c_T 
\sum_{\chi\in A(T,U,K)^*} 
\mathscr X_{\Lambda'_A}(\pi(\tilde\lambda)) 
\phi(\chi;\rho)\prod_{\alpha\in\mathscr A_U}\mathrm L^*(1;\chi_\alpha)
\prod_{v\mid\infty} \phi_{v,A_v}(\chi_{v,1};\mathrm im(\chi_v)) \\
&= c_T  \,
\mathscr X_{\Lambda'_A}(\pi(\tilde\lambda))  \,
\left(\lim_{t\ra 1} 
(t-1)^{b+\rang M}
\left(
\sum_{\chi\in A(T,U,K)^*}
\hat H_A(\chi;\tilde\rho+(t-1)\tilde\lambda) \right)\right).
\end{align*}

By Fourier inversion for the finite group $A(T,U,K)^*$, we have
\[
\sum_{\chi\in A(T,U,K)^*}
\hat H_A(\chi;\tilde\rho+(t-1)\tilde\lambda) 
= \Card{A(T,U,K)^*} \int_{ T(\AD_F)^{A(T,U,K)^*}}  H(x,\tilde\rho+\tilde {\mathbf s})
\delta(x)\theta_A(x) \,\mathrm d\mu_T(x). \]
Moreover, since the functions we integrate are invariant under~$K$, it follows
from the definition~\eqref{eq.muTperp}
of the measure~$\mu_T^\perp$ on~$T(\AD_F)^\perp$ that
\[
\sum_{\chi\in A(T,U,K)^*}
\hat H_A(\chi;\tilde\rho+(t-1)\tilde\lambda) 
= \int_{ T(\AD_F)^{\perp}}  H(x,\tilde\rho+\tilde {\mathbf s})
\delta(x)\theta_A(x) \,\mathrm d\mu_T^\perp(x), \]
so that
\begin{multline} \Theta_A = 
 c_T \, \mathscr X_{\Lambda'_A}(\pi(\tilde\lambda))  \times \\
\times \left(\lim_{t\ra 1}(t-1)^{b+\rang M} \int_{T(\AD_F)^{\perp}}
  H(x;\rho+(t-1)\lambda ) \delta( x)\theta_A( x)\,
   \mathrm d\mu_T ^\perp( x)\right).
\end{multline}

\subsection{Leading term and equidistribution for rational points}

We assume in this Subsection that $U=X$, \ie, we consider 
the distribution of rational points of bounded height
in toric varieties.
In this case, all analytic Clemens complexes are empty
and $\Clanmax_\infty(D)$ is understood as
the set $\{\emptyset\}$. We have 
\[ \Pic(U;\emptyset)= \Pic(X);\]
because
of the chosen normalizations for measures, 
 and the characteristic
function $\mathscr X_{\Lambda'_\emptyset}$
is the characteristic function of the effective cone~$\Lambda_\eff(X)$
in~$\Pic(X)$
multiplied by $\Card{\mathrm H^1(\Gamma,\bar M)}$.
Moreover, $T(\AD_F)^\perp$ is equal to~$T(\AD_F)^{\Br(X)}$ 
and is endowed with the Haar measure~$\Card{A(T)^*}\mu_T$.
The height zeta function
has a single pole at $t=1$ of multiplicity
\[ b=\Card{\mathscr A}-\rang M = \rang \Pic(X), \]
and 
\[
\Theta_\emptyset:=\lim (t-1)^{b}Z(t\rho)\]
is given by
\begin{equation}\label{eq.theta.empty}
 \Theta_\emptyset=  c_T \,\Card{A(T)}\,\Card{\mathrm H^1(\Gamma,\bar M)}\,
\mathscr X_{\Lambda_\eff(X)}(\rho) \, \lim_{t\ra 1} (t-1)^{\Card{ \mathscr A}}\int_{T(\AD_F)^{\Br(X)}}
 H(x;t\rho)\,\mathrm d\mu_T( x). \end{equation}

The boundary of~$T(\AD_F)^{\Br(X)}$ in $X(\AD_F)$ is contained in a countable
union of spaces of the form $\prod_{w\neq v}X(F_w)\times (X\setminus T)(F_v)$,
hence has measure~$0$ for the Tamagawa measure~$\tau_X$.
In our paper~\cite{chambert-loir-tschinkel2010},
we have computed limits as the one appearing in~\eqref{eq.theta.empty}
(Proposition~\ref{prop.zeta.global})
and derived in Theorem~\ref{theo.volume.global}
an asymptotic formula for volumes,
as well as the equidistribution property for height balls.
This analysis implies that
for any continuous function~$\phi$ on~$X(\AD_F)$,
\[ \lim_{t\ra 1} (t-1)^{\Card{\mathscr A}} \int_{{T(\AD_F)}^{\Br(X)}}
   H( x;t\rho)\phi(x)\,\mathrm d\mu_T( x)
 = \prod_{\alpha\in\mathscr A}\frac1{\rho_\alpha} \int_{X(\AD_F)}
     \phi( x)\,\mathrm d\tau_X( x), \]
where $\tau_X$ is Peyre's Tamagawa measure on~$X(\AD_F)$.
Then, the same formula also holds
for the characteristic function of
$X(\AD_F)^{\Br(X)}$ since its boundary has measure~$0$.
Therefore,
\[\Theta_\emptyset = c_T\,\Card{A(T)} \,\Card{\mathrm H^1(\Gamma,\bar M)}\,
\mathscr X_{\Lambda_\eff(X)}(\rho)  \, \tau_X(X(\AD_F)^{\Br(X)}). \]

\begin{lemm}\label{lemm.vosk}
One has
\[ c_T\,\Card{A(T)}\,\Card{\mathrm H^1(\Gamma,\bar M)}
 = \Card{\mathrm H^1(\Gamma,\Pic(\bar X))} .\]
\end{lemm}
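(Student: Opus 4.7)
The plan is to combine two classical finiteness results concerning algebraic tori.

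First, I would invoke Ono's formula for the Tamagawa number of~$T$. With the normalizations of Section~\ref{sec:recall}---where the local measures carry the factor $\tau_v(T)=\mathrm L_v(1,\bar M)^{-1}$ at unramified~$v$ and the product is regularized by the principal value $\mathrm L_*^S(1,\bar M)^{-1}$---this formula reads
\[ \tau(T) := \vol(T(\AD_F)^1/T(F)) = \frac{\Card{\mathrm H^1(\Gamma,\bar M)}}{\Card{\Sha(T)}}, \]
where $\Sha(T)=\ker\bigl(\mathrm H^1(F,T)\to \prod_{v}\mathrm H^1(F_v,T)\bigr)$. Since by definition $c_T=\tau(T)^{-1}$, this rearranges to
\[ c_T\cdot \Card{\mathrm H^1(\Gamma,\bar M)} = \Card{\Sha(T)}. \]

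Second, I would appeal to Voskresenskii's formula (Theorem~6 of~\cite{voskresenskii1970}, already invoked in the proof of Lemma~\ref{lemm.wa}), which relates the weak approximation defect $A(T)$, the Tate--Shafarevich group $\Sha(T)$, and the Galois cohomology of the geometric Picard group of a smooth projective equivariant compactification~$X$:
\[ \Card{\mathrm H^1(\Gamma,\Pic(\bar X))} = \Card{A(T)}\cdot \Card{\Sha(T)}. \]
This can equivalently be read off the exact sequence relating these three groups coming from a flasque resolution of~$\bar M$. Multiplying the two displayed identities immediately yields the claim.

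The only nontrivial point is to check that the measure normalization on $T(\AD_F)^1$ implicit in the definition of~$c_T$ matches the one used in Ono's formula. This is bookkeeping rather than a substantial obstacle: one verifies that the convergence factors $\mathrm L_v(1,\bar M)^{-1}$ inserted at finite places and the principal-value regularization of the Artin $L$-function at $s=1$ reproduce Ono's original convergence factors, so that the Tamagawa volume $\vol(T(\AD_F)^1/T(F))$ in his sense coincides with $c_T^{-1}$ in ours.
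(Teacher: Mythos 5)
Your proof is correct and follows the same two-step strategy as the paper's: invoke Ono's formula $\vol(T(\AD_F)^1/T(F))=\Card{\mathrm H^1(\Gamma,\bar M)}/\Card{\Sha(T)}$, then combine it with Voskresenskii's identity $\Card{A(T)}\,\Card{\Sha(T)}=\Card{\mathrm H^1(\Gamma,\Pic(\bar X))}$. Your added discussion of how the convergence factors $\mathrm L_v(1,\bar M)^{-1}$ and the principal-value regularization match Ono's normalization is a correct elaboration of the paper's brief remark that $\mu_T$ is exactly Ono's Haar measure.
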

\begin{proof}
The measure~$\mathrm d\tau_{(X,X\setminus T)}$ is exactly the Haar
measure of $T(\AD_F)$ used by Ono in~\cite{ono1963}.
According to this paper,
\[ \vol(T(\AD_F)^1/T(F))=   \frac{\Card{\mathrm H^1(\Gamma,\bar M)}}{\Card{\Sha(T)}} .\]
Moreover, Voskresenskii  has shown (\cite{voskresenskii1970}, Theorem~6) that
\[ \Card{A(T)}\,\Card{\Sha(T)}=\Card {\mathrm H^1(\Gamma,\Pic(\bar X))}. \]
The lemma follows.
\end{proof}

We summarize our results:

\begin{theo}\label{theo:proj-main}
Let $X$ be a smooth projective toric variety over a number field~$F$.
Endow the canonical line bundle~$K_X$ with a smooth adelic
metric and let $H$ be the corresponding height function.
Then the anticanonical height zeta function  defined by
\[ Z(s)=\sum_{x\in T(F)} H_{K_X}(x)^{s} \]
is holomorphic for $\Re(s)>1$, has a meromorphic continuation
to some half-plane $\Re(s)>1-\delta$ for some positive real number~$\delta$,
with a single pole of order~$b=\rang(\Pic(X))$ at $s=1$ and has at most polynomial
growth in vertical strips. Moreover, 
\[ \Theta =  \lim_{s\ra 1} (s-1)^b Z(s)
 = \Card{\mathrm H^1(\Gamma,\Pic (\bar X ))}\,
\mathscr X_{\Lambda_\eff(X)}(\rho)  \, \tau_X(X(\AD_F)^{\Br(X)}). \]
\end{theo}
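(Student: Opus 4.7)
My plan is to read this theorem as the special case $U=X$ (equivalently $D=\emptyset$) of the general framework developed in the preceding subsections. The first step is to observe that in this case the integrality condition is vacuous ($\delta\equiv 1$), the index sets satisfy $\mathscr A_U=\mathscr A$ and $\mathscr A_D=\emptyset$, the analytic Clemens complexes $\Clan_v(D)$ are empty for every archimedean $v$, and $\Clanmax_\infty(D)$ degenerates to the single ``empty face''. Consequently the decomposition $\tilde Z=\sum_A\tilde Z_A$ has a single term, and the archimedean factors $\phi_{v,\emptyset}$ from Lemma~\ref{lemm:toric-in-S} contribute no pole, only a holomorphic factor that is rapidly decreasing in vertical strips. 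The Poisson summation carried out in the general analysis then applies verbatim; its validity is underwritten by Lemmas~\ref{lemma.compact.open}, \ref{lemm:toric-outside-S}, and \ref{lemm:toric-in-S}, together with moderate growth of Hecke $L$-functions in vertical strips.

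Next, I would derive the meromorphic continuation of $Z(s)$. Via the one-parameter specialization $\tilde{\mathbf s}=t\tilde\rho$ (the anticanonical class, which is assumed ample, hence strictly inside the effective cone), the meromorphic continuation of $\tilde Z$ to a tube over an open neighborhood of $\tilde\rho$ furnishes the continuation of $s\mapsto Z(s\rho)$ across $\Re s=1$. All poles come from the Hecke $L$-factor $\prod_{\alpha\in\mathscr A}\mathrm L(s_\alpha,\chi_\alpha)$ in Lemma~\ref{lemm:toric-outside-S}. The order of pole at $s=1$ is computed as in \eqref{eqn.b}: since $\mathrm H^1(\Gamma,M)$ is torsion, the exact sequence \eqref{eqn:m} yields $\rank\Pic(X)=\Card{\mathscr A}-\rank M$, which is precisely the value of $b$ when the sum over archimedean Clemens contributions vanishes. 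Polynomial growth in vertical strips follows by combining standard bounds for Hecke $L$-functions with the rapid decay of the auxiliary functions $\phi(\chi;\cdot)$ and $\phi_{v,\emptyset}(\chi_v;\cdot)$.

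For the leading constant, I would start from formula~\eqref{eq.theta.empty}. Two simplifications occur in the case $U=X$: the lattice $\Pic(X;\emptyset)$ sits inside $\Pic(X)$ with index exactly $\Card{\mathrm H^1(\Gamma,\bar M)}$ by \eqref{eqn:m}, so $\mathscr X_{\Lambda'_\emptyset}=\Card{\mathrm H^1(\Gamma,\bar M)}\cdot \mathscr X_{\Lambda_\eff(X)}$; and the group $A(T,X,K)^*$ that appears in the general theory reduces (for $K$ the maximal compact subgroup) to $A(T)^*$, consistent with Lemmas~\ref{lemm.wa}, \ref{lemm.brauer}. The residual limit
\[
\lim_{t\to 1}(t-1)^{\Card{\mathscr A}}\int_{T(\AD_F)^{\Br(X)}} H(x;t\rho)\,\mathrm d\mu_T(x)
\]
is then evaluated by invoking Proposition~\ref{prop.zeta.global} and Theorem~\ref{theo.volume.global} from the companion paper, whose equidistribution statement yields $\prod_{\alpha}\rho_\alpha^{-1}\cdot \tau_X(X(\AD_F)^{\Br(X)})$ once one notes that the boundary of $X(\AD_F)^{\Br(X)}$ has zero Tamagawa measure (as argued in the excerpt). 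The $\prod_\alpha\rho_\alpha^{-1}$ is absorbed into $\mathscr X_{\Lambda_\eff(X)}(\rho)$ via the chosen identifications.

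The final step is to collapse the constants using Lemma~\ref{lemm.vosk}: $c_T\cdot\Card{A(T)}\cdot\Card{\mathrm H^1(\Gamma,\bar M)}=\Card{\mathrm H^1(\Gamma,\Pic(\bar X))}$, arriving at the asserted formula for $\Theta$. The main obstacle I anticipate is purely bookkeeping: reconciling the many normalizations of Haar and Tamagawa measures, tracking the lattice discrepancies between $\Pic^T(X;\emptyset)$, $\Pic(X)$, and $M$, and matching the indexing of characters in $A(T)^*$ versus $\mathscr U_T^K$; once these identifications are made correctly, the proof is a direct specialization of the general result.
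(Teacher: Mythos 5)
Your proposal matches the paper's own argument: the paper proves Theorem~\ref{theo:proj-main} exactly by specializing the general analysis of $\tilde Z$ to $U=X$ (so $\Clanmax_\infty(D)=\{\emptyset\}$), invoking formula~\eqref{eq.theta.empty}, the lattice index $\bigl[\Pic(U):\Pic(U;\emptyset)\bigr]=\Card{\mathrm H^1(\Gamma,\bar M)}$, the companion paper's volume asymptotics, and Lemma~\ref{lemm.vosk} to collapse the constants. The only small inaccuracy is calling $-K_X$ ``ample'' rather than simply noting that $\rho=\sum_\alpha D_\alpha$ automatically lies in the interior of the effective cone, but this does not affect the argument.
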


\begin{coro}
Under the hypothesis of Theorem~\ref{theo:proj-main}, there exists
a monic polynomial~$P$ of degree~$b-1$ and a positive real number~$\eps$
such that one has
\[ \card\{x\in T(F)\,;\, H(x)\leq B\} = \frac\Theta{(b-1)!} \, B P(\log B) + \mathrm O(B^{1-\eps}),  \qquad  B\ra\infty. \]
Moreover, rational points of height~$\leq B$ equidistribute
towards the probability measure
\[ \frac 1{\tau_X(X(\AD_F)^\Br)} \, \tau_X|_{X(\AD_F)^{\Br(X)}} \]
on $X(\AD_F)^{\Br(X)}$.
\end{coro}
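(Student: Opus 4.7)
The plan is to deduce the corollary from Theorem~\ref{theo:proj-main} in two stages: a contour-integration Tauberian argument yields the counting asymptotic, and the Poisson analysis of Section~\ref{sec:main} applied to a twisted height zeta function yields equidistribution.

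\textbf{Counting asymptotic.}~--- I would start from the Mellin--Perron representation
\[ N(B) = \frac1{\deuxipi}\int_{c-\mathrm i\infty}^{c+\mathrm i\infty} Z(s)\,\frac{B^s}{s}\,\mathrm ds, \qquad c>1, \]
and shift the contour to the line $\Re(s)=1-\eps$ for some small $\eps>0$. The residue captured at~$s=1$ equals $B\,P(\log B)$, where $P$ is a polynomial of degree~$b-1$ whose leading coefficient~$\Theta/(b-1)!$ comes from the top Laurent coefficient of $(s-1)^bZ(s)$ and whose lower-order coefficients are determined by the subleading Laurent data. The polynomial growth of~$Z(s)$ in vertical strips guaranteed by Theorem~\ref{theo:proj-main} bounds the shifted integral by~$\mathrm O(B^{1-\eps})$ after a standard truncation of the contour at height~$\pm T$ and an optimal choice of~$T$ in terms of~$B$.

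\textbf{Equidistribution.}~--- Since $X$ is projective, $X(\AD_F)$ is a compact Hausdorff space. For any continuous~$\phi$ on~$X(\AD_F)$, introduce the twisted zeta function
\[ Z_\phi(s) = \sum_{x\in T(F)} \phi(x)\,H_{K_X}(x)^{-s}. \]
By Stone--Weierstrass it suffices to treat a dense class of test functions of product form $\phi=\prod_v\phi_v$, with $\phi_v$ smooth of compact support at archimedean places, locally constant of compact support at finite places, and equal to $1$ for almost all~$v$. For such~$\phi$, the Poisson analysis of Section~\ref{sec:main} goes through verbatim with $\phi_v$ inserted into each local Fourier transform: the rapid-decay estimates of Lemmas~\ref{lemma.compact.open} and~\ref{lemm:toric-in-S} persist, the pole structure of~$Z_\phi(s)$ at~$s=1$ is unchanged, and the residue computation culminating in Lemma~\ref{lemm.vosk} produces
\[ \lim_{s\ra 1}(s-1)^b Z_\phi(s) = \Card{\mathrm H^1(\Gamma,\Pic(\bar X))}\,\mathscr X_{\Lambda_\eff(X)}(\rho)\int_{X(\AD_F)^{\Br(X)}}\phi\,\mathrm d\tau_X, \]
the restriction to the Brauer--Manin subset coming from the identification $T(\AD_F)^\perp=T(\AD_F)^{\Br(X)}$ of Lemmas~\ref{lemm.wa} and~\ref{lemm.brauer}. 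Applying the Tauberian step to~$Z_\phi$ and dividing by~$N(B)$ gives
\[ \frac1{N(B)}\sum_{\substack{x\in T(F)\\ H(x)\leq B}}\phi(x) \;\longrightarrow\; \frac{1}{\tau_X(X(\AD_F)^{\Br(X)})}\int_{X(\AD_F)^{\Br(X)}}\phi\,\mathrm d\tau_X, \]
and weak-$*$ convergence of probability measures on the compact space $X(\AD_F)$ propagates the statement to every continuous~$\phi$.

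\textbf{Main obstacle.}~--- The technical heart of the argument is to verify that the vertical-strip decay estimates of Lemma~\ref{lemm:toric-in-S} remain valid, uniformly over the chosen dense class, once $\phi_v$ is inserted. The integration-by-parts argument against invariant vector fields on~$T(F_v)$ extends without change, since derivatives of~$\phi_v$ remain smooth with compact support, but careful bookkeeping is required so that the resulting bounds are strong enough to justify the Poisson summation term by term and to feed into the contour-shifting step of the Tauberian argument.
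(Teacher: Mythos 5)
The counting asymptotic is handled the same way as in the paper: the paper simply invokes the Tauberian theorem from \cite{chambert-loir-tschinkel2010}, and your contour-shift sketch is the standard content of that result, so this part matches.

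The equidistribution argument, however, takes a genuinely different route. The paper deduces it from Theorem~\ref{theo:proj-main} \emph{as a black box}, applied to every smooth adelic metrization of~$-K_X$, and then invokes Peyre's observation together with the abstract equidistribution result (Proposition~\ref{prop.abstract.equi} of~\cite{chambert-loir-tschinkel2010}) via density of smooth adelic metrics among continuous ones. You instead propose to insert a test function~$\phi$ into the height zeta function and redo the entire Poisson/Fourier analysis for the twisted zeta function~$Z_\phi$. Both routes are legitimate and classical, but the paper's is much more economical: nothing from Sections~3.2--3.6 needs to be re-examined.

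There are two concrete gaps in your twisted-zeta route that you should not describe as ``verbatim.'' First, at the finitely many nonarchimedean places where $\phi_v\neq 1$, inserting $\phi_v$ destroys the $K_v$-invariance of the integrand; Lemma~\ref{lemma.compact.open} then no longer forces $\hat H_v(\chi_v;\mathbf s)$ to vanish on $K_v$-ramified characters, so the Poisson sum picks up additional characters. This is fixable (take $\phi_v$ invariant under a smaller compact open $K'_v$ and enlarge the lattice $\mathscr U_T^{K}$ accordingly), but it changes the bookkeeping in Section~3.3 and must be said. Second, your dense class---archimedean $\phi_v$ with compact support---needs disambiguation: if you mean compact support in $T(F_v)$, such products do not separate boundary points of $X(\AD_F)=\prod X(F_v)$, so Stone--Weierstrass does not give density in $C(X(\AD_F))$; you must instead argue that any weak-$*$ limit point of the empirical measures assigns zero mass to the boundary (using that $\tau_X$ does, and that the total masses are~$1$) before you can propagate the convergence to all of $C(X(\AD_F))$. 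If you mean $\phi_v$ smooth on all of $X(F_v)$, the Stone--Weierstrass step is clean, but then the local Fourier estimate of Lemma~\ref{lemm:toric-in-S} must be rechecked because $\phi_v$ no longer stays away from the boundary. Either way, some version of the bookkeeping you defer as ``the main obstacle'' is genuinely required.
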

Since the Tamagawa measure has full support on~$X(\AD_F)$,
the equidistribution statement gives a quantitative refinement
to the density of $T(F)$ in $X(\AD_F)^{\Br(X)}$
established in Lemma~\ref{lemm.brauer}.

\begin{proof}
The first statement follows the Theorem using a standard
Tauberian theorem (see, \eg, Theorem~\ref{theo.tauber}
of~\cite{chambert-loir-tschinkel2010}).
As already observed by Peyre in~\cite{peyre95}
(see also the abstract equidistribution theorem established
in Proposition~\ref{prop.abstract.equi} 
of~\cite{chambert-loir-tschinkel2010}), 
the second one is then a consequence of this formula,
applied to any smooth adelic metric, combined with the fact that smooth
adelic metrics are dense in the space of continuous adelic metrics.
\end{proof}

\subsection{Leading term and equidistribution for integral points}

We return to the case of integral points.
Recall that
\begin{equation}\label{eqn:theta} 
\lim_{t\ra 1} (t-1)^{b}\mathrm Z(t\rho)
=  
     \sum_{A\in\Clanmax_\infty(D) }
\Theta_A
\end{equation}
where, for any $A\in\Clanmax_\infty(D)$, $\Theta_A$ is given by the formula
\begin{multline*}\label{eqn:thetaA-limit} \Theta_A  = 
 \Card{A(T,U,K)^*}\, c_T \, \mathscr X_{\Lambda'_A}(\pi(\tilde\lambda))  \times \\
 \quad \times \left(\lim_{t\ra 1}(t-1)^{b+\rang M} \int_{T(\AD_F)^{A(T,U,K)^*}}
  H( x;\rho+(t-1)\lambda ) \delta( x)\theta_A( x)\,
   \mathrm d\mu_T( x)\right) .
\end{multline*}

Let us choose a family of representatives $(\xi_1,\dots,\xi_m)$ for the
finitely many cosets of~$T(\AD_F)^{A(T,U,K)^*}$ in~$T(\AD_F)$.
For any $x\in T(\AD_F)$, let $j(x)$ be the unique integer
such that $x\in \xi_{j(x)} T(\AD_F)^{A(T,U,K)^*}$.
We define a function~$\delta^\perp$ on~$T(\AD_F)$ by
\[ \delta^\perp(x) = \begin{cases} \delta(x) & \text{if $x\in T(\AD_F)^{A(T,U,K)^*}$; } \\
0 & \text{otherwise}. \end{cases} \]
There exists a finite set~$S$ of finite places of~$F$ such that
$\xi_{j,v}\in K_v$ for $v\not\in S$. 
Consequently, there exists a continuous function~$\delta^\perp_S$
on $\prod_{v\in S} T(F_v)$ such that 
\[ \delta^\perp (x)  =\delta^\perp_S( x) \prod_{v\not\in S} \delta_v(x_v). \]

We now apply limit formulae from
of our paper~\cite{chambert-loir-tschinkel2010}. 
Indeed, a straightforward generalization of Equation~\eqref{eqn.Z*(1)-adelic} 
for the ring of finite adeles~$\AD_F^S$,
Proposition~\ref{prop.igusa-volume} for archimedean places,
as well as the convergence statement of that proposition
for places in~$S$, we obtain:
\begin{align*}
\lim_{t\ra 1}(t-1)^{b+\rang M} \int_{T(\AD_F)^{A(T,U,K)^*}}
  H(x;\rho+(t-1)\lambda ) \delta( x)\theta_A( x)\,
   \mathrm d\mu_T( x) \hskip -11cm \\
& = \lim_{t\ra 1} (t-1)^{b+\rang M} 
\int_{T(\AD_F)}
  H(x;\rho+(t-1)\lambda ) \tilde \delta( x)\theta_A( x)\,
   \mathrm d\mu_T( x)  \\
&= 
\int\prod_{v\in S}H_v(x;\rho)  \delta^\perp_S(x)   \prod_{v\not\in S}\delta_v(x_v) 
\theta_A(x)
\prod_{v\in S}\mathrm d\mu_{T}(x_v)\, \mathrm d\tau_U^S (x^S) 
\prod_{v\mid\infty} \mathrm d\tau_{A_v}(x_v),
\end{align*}
where the last integral is over 
\[ {\prod_{v\in S}T(F_v)\, U(\AD_F^S) \, \prod\limits_{v\mid\infty} D_{A_v}(F_v)} . \]
Recall that $\theta_{A}\equiv 1$ on~$\prod_{v\mid\infty}D_{A_v}(F_v)$.
By definition of the measures $\tau_{U}^S$ on $U(\AD_F^S)$
and~$\tau_U^\fin$ on $U(\AD_F^\fin)$, we thus have
\begin{multline*}
\lim_{t\ra 1}(t-1)^{b+\rang M} \int_{T(\AD_F)^{A(T,U,K)^*}}
  H(x;\rho+(t-1)\lambda ) \delta( x)\theta_A( x)\,
   \mathrm d\mu_T( x) \\
 = 
\int_{ U(\AD_F^S) \, \prod\limits_{v\mid\infty} D_{A_v}(F_v)} 
\delta^\perp_S(x)   \prod_{v\not\in S}\delta_v(x_v) 
\, \mathrm d\tau_U^\fin (x^\fin) 
\prod_{v\mid\infty} \mathrm d\tau_{A_v}(x_v).
\end{multline*}

 By Lemma~\ref{lemm.vosk},
 \[ \Card{A(T)}c_T  = \frac
 {\Card {\mathrm H^1(\Gamma,\Pic(\bar X))} }
 {\Card {\mathrm H^1(\Gamma,\bar M)} } \]
 so that,
 \begin{multline}\label{eqn:thetaA}
  \Theta_A = 
 \frac{\Card{A(T,U,K)^*}}{\Card{A(T)^*}}
 \frac{\Card {\mathrm H^1(\Gamma,\Pic(\bar X))} }
       {\Card {\mathrm H^1(\Gamma,\bar M)} } 
 \mathscr X_{\Lambda'_A}(\rho) \times \\
 \times
\int_{ U(\AD_F^S) \, \prod\limits_{v\mid\infty} D_{A_v}(F_v)} 
\delta^\perp_S(x)   \prod_{v\not\in S}\delta_v(x_v) 
\, \mathrm d\tau_U^\fin (x^\fin) 
\prod_{v\mid\infty} \mathrm d\tau_{A_v}(x_v).
 \end{multline}
 
In the case of rational points, 
the positivity of the constant~$\Theta_\emptyset$ was a straightforward consequence 
of the fact that $X(\AD_F)^{\Br(X)}$
is open and non-empty in~$X(\AD_F)$.
For integral points, an additional argument
is necessary. Indeed, the group $A(T,U,K)^*$ may impose additional
non-trivial obstructions  to the \emph{existence} of integral points.


\begin{defi}
One says that $\mathscr U$ possesses an automorphic obstruction
to the existence of integral points if there does not exist
a point $(x_v)\in T(\AD_F)^{\perp}$ such that $x_v\in\mathscr U(\mathfrak o_v)$
for all finite places~$v$.
\end{defi}

\begin{lemm}\label{lemm:non-vanishing}
Assume that $\mathscr U$ does not possess an automorphic obstruction
to the existence of integral points.
Then, for any $A\in\Clanmax_\infty(D)$, one has $\Theta_A>0$.
\end{lemm}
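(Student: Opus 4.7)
The plan is to show that each of the factors appearing in formula~\eqref{eqn:thetaA} is strictly positive. The combinatorial and cohomological prefactors $\Card{A(T,U,K)^*}$, $\Card{A(T)^*}$, $\Card{\mathrm H^1(\Gamma,\Pic(\bar X))}$, and $\Card{\mathrm H^1(\Gamma,\bar M)}$ are manifestly positive integers, and $c_T$ is a positive volume. The characteristic-function factor $\mathscr X_{\Lambda'_A}(\pi(\tilde\lambda))$ is positive because $\tilde\lambda\in V$ has all coordinates positive (the log-anticanonical class lies in the interior of the effective cone), so $r_A(\tilde\lambda)$ lies in the interior of the simplicial cone~$\Lambda_A$, and the integral formula expressing $\mathscr X_{\Lambda'_A}$ as the Fourier–Laplace transform of $\mathscr X_{\Lambda_A}$ in the $M_\R$-direction yields a strictly positive real number on such inputs. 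It remains to show that the integral over $U(\AD_F^\fin)\times\prod_{v\mid\infty}D_{A_v}(F_v)$ is positive.

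Since the integrand factors as a product of a function on the finite adelic part and the constant function~$1$ on the archimedean strata, the integral factorizes. For each archimedean place~$v$, the measure~$\tau_{A_v}$ constructed in Section~\ref{sec:recall} is a positive Radon measure with full support on the smooth $v$-adic manifold~$D_{A_v}(F_v)$, and by the very definition of a face of the analytic Clemens complex~$\Clan_v(D)$ we have $D_{A_v}(F_v)\neq\emptyset$. Hence $\tau_{A_v}(D_{A_v}(F_v))>0$, and the archimedean contribution is positive.

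For the finite adelic contribution, the integrand is the characteristic function of the set
\[
 \Xi = T(\AD_F^\fin)^{\perp}\cap \prod_{v\nmid\infty}\mathscr U(\mathfrak o_v),
\]
where $T(\AD_F^\fin)^\perp$ denotes the orthogonal at the finite places (the archimedean components of characters in $A(T,U,K)^*$ being trivial, the global orthogonal decomposes as a product of this finite-adelic orthogonal with $T(F_\infty)$). The key observation is that $T(\AD_F^\fin)^\perp$ is the preimage of the identity under a continuous homomorphism $T(\AD_F^\fin)\to (A(T,U,K)^*)^\vee$ to a \emph{finite discrete} group, hence it is open as well as closed. The set $\prod_v\mathscr U(\mathfrak o_v)$ is open in $T(\AD_F^\fin)$ by definition of the adelic topology, so $\Xi$ is open. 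By the hypothesis that $\mathscr U$ admits no automorphic obstruction to the existence of integral points, $\Xi$ is non-empty.

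Finally, the measure $\tau_U^\fin$ is a convergent product of the local measures $\mu_{U,v}$, each of which has full support on $U(F_v)$; hence $\tau_U^\fin$ assigns positive mass to every non-empty open subset of $U(\AD_F^\fin)$, and in particular to $\Xi$. Assembling these observations yields $\Theta_A>0$. The only delicate step is the identification of $\Xi$ as an open subset — which rests on the finiteness of $A(T,U,K)^*$ established earlier — together with the full-support property of the Tamagawa-type measures; the remainder is bookkeeping.
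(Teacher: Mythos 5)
Your proof is correct and rests on the same ingredients as the paper's argument: the non-vanishing hypothesis produces an integral point in the orthogonal at the finite places, the triviality of archimedean components of characters in $A(T,U,K)^*$ allows the archimedean contribution to be treated independently, and the full support of $\tau_U^{\fin}$ and $\tau_{A_v}$ on the respective non-empty sets yields positivity of the adelic integral. The paper packages this as a pointwise argument (a single point $\xi$ in the support at which the integrand is continuous and positive), whereas you carry out the factorization into finite and archimedean parts explicitly and also spell out the positivity of the cohomological prefactors and of $\mathscr X_{\Lambda'_A}$; this is a more detailed presentation of essentially the same proof.
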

\begin{proof}
Let $\xi =(\xi_v)$ be any point in~$T(\AD_F)^\perp$
such that $\xi_v\in \mathscr U(\mathfrak o_v)$ for all finite  places~$v$.
Since all characters in $A(T,U)^*$ are trivial on~$T(F_\infty)$,
we may assume that $\xi_v\in A_v$ for all archimedean places~$v$.
The function
\[ x\mapsto \delta^\perp_S(x) \prod_{v\not\in S} \delta_v(x_v) \]
under the integral sign
in the above limit  formula 
is continuous and positive at this point~$\xi$.
Since it belongs to the support of
the measure~$\mathrm d\tau_U^\fin \prod_{v\mid\infty}\mathrm d\tau_{A_v}$,
we obtain that $\Theta_A>0$, as claimed.
\end{proof}

Applying the tauberian theorem~\ref{theo.tauber.S}
of~\cite{chambert-loir-tschinkel2010},
this concludes the proof of  
our main result concerning the number of
integral points of bounded height:
\begin{theo}
\label{thm:integral-main}
Let $X$ be a smooth projective toric variety over a number field~$F$.
Let $D$ be an invariant divisor such that $-(K_X+D)$ is big
and let $U=X\setminus D$. Let $\mathscr U$ be a model
of~$U$ over~$\mathfrak o_F$. Assume that $\mathscr U$
does not possess an automorphic obstruction to the existence
of integral points.
Let us endow the log-anticanonical line bundle~$-(K_X+D)$ 
with a smooth adelic metric; let $H$ be the corresponding height function.
Then, when $B\ra\infty$,
\[ \card \{x\in T(F)\cap\mathscr U(\mathfrak o_F)\,;\, H(x)\leq B\} 
= \frac\Theta{(b-1)!} B (\log B)^{b-1} (1+ \mathrm O(1/\log B)), \]
where 
\[ b= \rang \Pic(U) + \sum_{v\mid\infty} \Card{A_v}
= \rang\Pic(U)+\sum_{v\mid\infty} (1+\dim\Clan_v(D))  \]
and $\Theta$ is a positive real number given by 
\begin{multline*} \Theta 
=  \frac{\Card{A(T,U,K)^*}}{\Card{A(T)^*}}\,
  \frac
     {\Card {\mathrm H^1(\Gamma,\Pic(X_E))} }
     {\Card {\mathrm H^1(\Gamma,M_E)} }  \times \\
\times
     \sum_{A\in\Clanmax_\infty(D) }
\mathscr X_{\Lambda'_A}(\rho)
\int_{U(\AD_{F,\fin})\prod D_{A_v}(F_v)}
\delta^\perp(x) 
\mathrm d\tau_U^\fin (x_\fin)
\prod_{v\mid \infty}\mathrm d\tau_{A_v}(x_v). \end{multline*}
\end{theo}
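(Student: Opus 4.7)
The plan is to derive the counting asymptotic from the analytic behavior of the height zeta function~$\mathrm Z(s\rho)$ via the Tauberian theorem~\ref{theo.tauber.S} of~\cite{chambert-loir-tschinkel2010}. All the substantive inputs have essentially been assembled in the preceding subsections; what remains is to aggregate them and to verify the hypotheses of the Tauberian statement. I would begin by specializing the enlarged zeta function to $\tilde{\mathbf s}=\tilde\rho+(s-1)\tilde\lambda$, so that $\mathrm Z(s\rho)=\tilde{\mathrm Z}(\tilde\rho+(s-1)\tilde\lambda)$, and invoke the Poisson formula, whose validity is guaranteed by the rapid decay in vertical strips of the local Fourier transforms (Lemmas~\ref{lemm:toric-outside-S} and~\ref{lemm:toric-in-S}) together with the moderate growth of Hecke $L$-functions. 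This expresses~$\mathrm Z(s\rho)$ as a sum over $\chi\in\mathscr U_T^K$ and $A\in\Clanmax_\infty(D)$ of the functions~$\tilde Z_A(\chi;\cdot)$, each meromorphic on a tube over a neighborhood of~$\tilde\rho$ with poles along the linearly independent linear forms attached to the cone~$\Lambda_A$.

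Next I would locate the poles of~$\mathrm Z(s\rho)$ on the critical line $\Re(s)=1$. A pole at $1+\mathrm i\tau$ can only arise when some $\chi\in\mathscr U_T^K$ and some face~$A_v$ satisfy $\tau=-m_v(\chi_\alpha)/\lambda_\alpha$ for some $\alpha\in\bigcup A_v$, which organizes the poles into finitely many arithmetic progressions of order at most~$b(\tau)\le b$. Proposition~\ref{prop.maximal-order} forces any character contributing the maximal order~$b$ to lie in~$A(T,U,K)^*$; for such characters the archimedean components vanish, hence $\tau=0$. Thus $s=1$ is the unique pole of order~$b$ on the line of convergence, and its residue equals $\sum_A\Theta_A$ as in~\eqref{eqn:theta}. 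Fourier inversion on the finite group~$A(T,U,K)^*$, combined with the limit formulae of~\cite{chambert-loir-tschinkel2010} at archimedean and finite places and the identity of Lemma~\ref{lemm.vosk}, rewrites each~$\Theta_A$ as the adelic integral in~\eqref{eqn:thetaA}; the hypothesis of absence of an automorphic obstruction then yields $\Theta>0$ via Lemma~\ref{lemm:non-vanishing}.

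Finally I would verify the polynomial growth of~$\mathrm Z(s\rho)$ in vertical strips, which follows from the rapid decay of the functions~$\phi_{v,A_v}$ on $\Tube(\Omega_v)$ and from standard convexity bounds for Hecke $L$-functions, and then apply the Tauberian theorem~\ref{theo.tauber.S} of~\cite{chambert-loir-tschinkel2010} to extract the claimed main term and the error term $\mathrm O(1/\log B)$. The main conceptual obstacle---already resolved by the preceding work---is the coupling of the two sources of poles: the archimedean Fourier transforms contribute the factors~$1+\dim\Clan_v(D)$ to the exponent through the analytic Clemens complex, while the nonarchimedean Hecke $L$-factors contribute~$r(\EP(U))$ through the exact sequences of Section~\ref{sec.toric-var}. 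Verifying that their combined effect matches the integer~$b$ and that no other character produces a pole of this order is precisely the content of the discussion surrounding~\eqref{eqn.b} together with Proposition~\ref{prop.maximal-order}; the residual task is purely bookkeeping of normalizations.
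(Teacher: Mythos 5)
Your proposal follows the paper's own argument step by step: Poisson summation (Lemmas~\ref{lemm:toric-outside-S} and~\ref{lemm:toric-in-S}), pole localization and identification of the maximal-order characters via Proposition~\ref{prop.maximal-order}, Fourier inversion over~$A(T,U,K)^*$ combined with Lemma~\ref{lemm.vosk} and the limit formulae from the companion paper to produce~\eqref{eqn:thetaA}, positivity from Lemma~\ref{lemm:non-vanishing}, and finally the Tauberian theorem. This is the same route as the paper, correctly ordered and with the right supporting lemmas cited.
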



Theorem~\ref{thm:integral-main} holds for any smooth metrization
of the log-anticanonical line bundle. By the abstract
equidistribution theorem of~\cite{chambert-loir-tschinkel2010}
(Prop.~\ref{prop.abstract.equi}), we obtain the following corollary:
\begin{coro}
Retain the hypotheses of Theorem~\ref{thm:integral-main}.
Then, when $B\ra \infty$,
the integral points of height~$\leq B$ equidistribute
towards the unique probability measure 
on $T(\AD_F)^\perp\cap U(\AD_{F,\fin})\times \prod_{v\mid\infty}{D}_{A_v}(F_v)$
which is proportional to
\[ 
     \sum_{A\in\Clanmax_\infty(D) }
\mathscr X_{\Lambda'_A}(\rho)
\int_{U(\AD_{F,\fin})\prod D_{A_v}(F_v)}
\mathbf 1_{X(\AD_F)^\Br}
\delta^\perp(x) 
\mathrm d\tau_U^\fin (x_\fin)
\prod_{v\mid \infty}\mathrm d\tau_{A_v}(x_v). \]
\end{coro}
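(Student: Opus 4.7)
The plan is to deduce the corollary from Theorem~\ref{thm:integral-main} via the abstract equidistribution principle recorded as Proposition~\ref{prop.abstract.equi} of~\cite{chambert-loir-tschinkel2010}. The crucial point is that the theorem applies verbatim to any smooth adelic metric on~$-(K_X+D)$, so one obtains an asymptotic formula for a whole family of height functions; this family is rich enough to detect integration against continuous test functions on the relevant adelic space.

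First, I would fix the ambient locally compact Hausdorff space on which the counting measures live. An integral point $x\in T(F)\cap\mathscr U(\mathfrak o_F)$ gives rise to an adelic point $(x_v)_v$ with $x_v\in\mathscr U(\mathfrak o_v)$ for $v\nmid\infty$ and $x_v\in X(F_v)$ for $v\mid\infty$. By Lemma~\ref{lemm.brauer} (and the density of $T(F)$ in $\overline{T(F)}$), its finite adelic projection lies in $T(\AD_F)^\perp\cap U(\AD_{F,\fin})$, while at archimedean places the points may accumulate on any stratum $D_{A_v}(F_v)$ indexed by a maximal face of $\Clan_v(D)$. The natural ambient space is therefore
\[
\Omega \;=\; \bigl(T(\AD_F)^\perp\cap U(\AD_{F,\fin})\bigr)\,\times\,\prod_{v\mid\infty} X(F_v),
\]
and the target measure $\bar\nu$ in the statement is the probability measure on $\Omega$ obtained by normalizing the (finite, positive by Lemma~\ref{lemm:non-vanishing}) sum $\sum_{A}\mathscr X_{\Lambda'_A}(\rho)\int(\cdots)$ of Theorem~\ref{thm:integral-main}.

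Second, for a smooth real-valued function $\phi$ on $\Omega$, twist the chosen smooth adelic metric on $-(K_X+D)$ by $e^{-\phi}$. The perturbed height $H_\phi=H\cdot e^{\phi}$ is attached to another smooth adelic metric, so Theorem~\ref{thm:integral-main} yields the asymptotic
\[
\sum_{x\in T(F)\cap\mathscr U(\mathfrak o_F),\,H_\phi(x)\le B} 1 \;=\; \frac{\Theta(\phi)}{(b-1)!}\,B(\log B)^{b-1}\bigl(1+\mathrm O(1/\log B)\bigr),
\]
in which $b$ is unchanged (it depends only on the geometry) and the constant $\Theta(\phi)$ is obtained from the explicit formula by replacing the integrand $\delta^\perp(x)$ with $e^{-\phi(x)}\delta^\perp(x)$; no other factor is affected. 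Equivalently, a Tauberian comparison (using Theorem~\ref{theo.tauber.S} of~\cite{chambert-loir-tschinkel2010} applied to both height functions) gives
\[
\lim_{B\to\infty}\frac{1}{N(B)}\sum_{H(x)\le B} e^{-\phi(x)} \;=\; \frac{\Theta(\phi)}{\Theta} \;=\; \int_\Omega e^{-\phi}\,\mathrm d\bar\nu.
\]
Replacing $\phi$ by $t\phi$ and differentiating at $t=0$, or directly approximating an arbitrary continuous $\psi\ge 0$ from below by expressions of the form $1-e^{-\phi}$, one upgrades this to convergence of $\frac{1}{N(B)}\sum \psi(x)$ to $\int\psi\,\mathrm d\bar\nu$ for every continuous $\psi$.

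The only genuine obstacle is the usual density argument: smooth adelic metrics must be shown to be dense in continuous ones in a topology fine enough that the above identity extends from smooth $\phi$ to arbitrary $\phi\in\mathscr C_c(\Omega)$. This is exactly the statement packaged into Proposition~\ref{prop.abstract.equi} of~\cite{chambert-loir-tschinkel2010}, and once invoked it identifies the weak limit of the empirical measures $\frac{1}{N(B)}\sum_{H(x)\le B}\delta_x$ with $\bar\nu$. Since the measure $\sum_A \mathscr X_{\Lambda'_A}(\rho)\,\mathrm d\tau_U^{\mathrm{fin}}\prod_{v\mid\infty}\mathrm d\tau_{A_v}$ is supported on $\bigl(T(\AD_F)^\perp\cap U(\AD_{F,\fin})\bigr)\times\prod_{v\mid\infty}D_{A_v}(F_v)$, so is $\bar\nu$, giving the asserted equidistribution.
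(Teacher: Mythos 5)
Your argument is correct and takes essentially the same route as the paper: observe that Theorem~\ref{thm:integral-main} holds for every smooth adelic metric on~$-(K_X+D)$, note that a perturbation $H\mapsto He^\phi$ changes the leading constant by replacing $\delta^\perp$ with $e^{-\phi}\delta^\perp$ in the residue/Tamagawa integral, and then invoke Proposition~\ref{prop.abstract.equi} of~\cite{chambert-loir-tschinkel2010} to upgrade this family of asymptotics to weak convergence of the empirical measures. The paper simply cites those two facts directly; you have unpacked what the abstract equidistribution theorem does internally, but the content is the same.
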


\begin{rema}
Langlands has parametrized the group of
automorphic characters of an algebraic torus in terms
of Galois cohomology of the dual L-group.
This has been generalized by Kottwitz--Shelstad and Nyssen
(see~\cite{nyssen1995})
to complexes of tori.
Comparing this description with the outcome
of the Hochschild--Serre spectral sequence
for the étale cohomology of~$\gm$
on~$U_{\bar F}$ strongly suggests that the kernel of $A(T,U)^*$
in $T(\AD_F)$ is cut out by the Brauer--Manin obstruction
defined by the algebraic part of~$\Br(U)$. 
This holds indeed when $U$ is proper (see~\cite{sansuc1981}
and Lemma~\ref{lemm.brauer} above) 
or when $U=T$ (see~\cite{harari2010}).
\end{rema}

\def\noop#1{\ignorespaces}

\bibliographystyle{smfplain}

\bibliography{aclab,acl}

\end{document}